\newtheorem{thm}{Theorem}[section]
\newtheorem*{thma}{Theorem A}
\newtheorem*{thmB}{Theorem B}
\newtheorem{defprop}[thm]{Definition-Proposition}
\newtheorem{cor}[thm]{Corollary}
\newtheorem{lem}[thm]{Lemma}
\newtheorem{prop}[thm]{Proposition}
\newtheorem*{propC}{Proposition C}
\theoremstyle{definition}
\newtheorem{defn}[thm]{Definition}
\newtheorem{rem}[thm]{Remark}
\newtheorem{rmk}[thm]{Remark}
\numberwithin{equation}{section}
\newcommand{\eps}{\varepsilon}
\newcommand\xleftrightarrow[2][]{\ext@arrow 0099{\longleftrightarrowfill@}{#1}{#2}}
\def\longleftrightarrowfill@{\arrowfill@\leftarrow\relbar\rightarrow}
\def\mc{\mathcal}
\DeclareMathOperator{\Pic}{Pic}
\def\N{\mathbb{N}}
\def\Q{\mathbb{Q}}
\def\R{\mathbb{R}}
\def\Z{\mathbb{Z}}
\def\cD{\mc{D}}
\def\cE{\mc{E}}
\def\cF{\mc{F}}
\def\cH{\mc{H}}
\def\cL{\mc{L}}
\def\O{\mc{O}}
\def\cO{\mc{O}}
\def\cP{\mc{P}}
\def\cU{\mc{U}}
\newcommand{\reg}[2]{\ensuremath{{\rm reg}(#2,#1)}}
\newcommand{\creg}[2]{\ensuremath{{\rm reg}_{\rm cont}(#2,#1)}}
\newcommand{\rego}[1]{\reg{\cO(1)}{#1}}
\newcommand{\crego}[1]{\creg{\cO(1)}{#1}}
\newcommand{\deq}{\ensuremath{\stackrel{\rm def}{=}}}
\newcommand{\st}[1]{\ensuremath{\left\{ #1 \right\}}}
\begin{document}
\title{Continuous CM-Regularity of Semihomogeneous Vector Bundles}
\author{Alex K\"{u}ronya, Yusuf Mustopa}
\begin{abstract}
We prove that the continuous CM (Castelnuovo-Mumford) regularity of a semihomogeneous vector bundle on an abelian variety $X$ is a piecewise-constant function of its rational Chern data.  We also use generic vanishing theory to obtain a sharp upper bound on the continuous CM regularity of any vector bundle on $X.$  Equality in this bound is attained for certain semihomogeneous bundles, including many Verlinde bundles when $X$ is a Jacobian.
\end{abstract}
\maketitle

\section*{Introduction}
 
If $X$ is a smooth projective variety and $\cO(1)$ is an ample and globally generated line bundle on $X,$ the CM (Castelnuovo-Mumford) regularity of a coherent sheaf $\cF$ with respect 
to $\cO(1)$ is  
\[
 \reg{\cO(1)}{\cF} \deq \min\st{ m\in\Z\mid \forall\,i>0\ \ H^i(\cF(m-i))=0} \ 
\]
This invariant is instrumental in measuring the homological complexity of the graded module $H^{0}_{\ast}(\cF) = \oplus_{m}H^{0}(\cF(m))$, and at the same time  
helps measure positivity,  since $\cF(m)$ is globally generated for all $m \geq \reg{\cO(1)}{\cF}.$

In light of Koll\'{a}r's suggestion that the positivity of adjoint bundles should be controlled by intersections of Chern classes \cite{Kol} we may ask the following:  
if $\cE$ is a vector bundle on $X$, when does ${\rm reg}(\omega_{X} \otimes \cE,\cO(1))$ depend entirely on the Chern data of $\cE,$ $\cO(1)$ and $X$?  
The purpose of this note is to address this question when $X$ is an abelian variety.  

An example of the numerical nature of positivity on abelian varieties can be seen in \cite{KL}, where it was shown that the $N_p$ property for an embedding of an abelian surface $X$ is ``almost numerical" in that when the degree of $X$ is sufficiently large, the obstruction to the $N_p$ property is an elliptic curve of low degree on $X.$   

A variant of CM regularity well-suited to sheaves on irregular varieties was introduced and studied in \cite{Mus}.  The \textit{continuous CM regularity} of a coherent sheaf $\cF$ with respect to $\cO(1)$, which is denoted by $\crego{\cF},$ is equal to ${\rm reg}(\cF \otimes \alpha,\cO(1))$ for general $\alpha \in \widehat{X}$; see Section \ref{subsec:pos-irreg} for details.  A related notion, that of continuous rank, has recently been studied in \cite{BPS}.  We will study continuous CM regularity for the simplest interesting class of bundles on abelian varieties.

Semihomogeneous bundles were first studied by Mukai in \cite{Mu1} and are precisely the vector bundles on an abelian variety which admit an Atiyah-type classification; they include all line bundles on abelian varieties, as well as all indecomposable vector bundles on elliptic curves.  Among their many pleasant properties is that the Chern classes of a semihomogeneous bundle $\cE$ of rank $r$ all depend on $c_{1}(\cE)/r$ (e.g.~ Theorem 5.12 of \cite{Y}).  Our first main result establishes that continuous CM regularity is a numerical property for semihomogeneous vector bundles. 

\begin{thma}
Let $X$ be an abelian variety, and let $\eta =c_{1}(\cO(1)).$  Then there exists a piecewise-constant function $\rho_{\eta} : N^{1}_{\R}(X) \to \Z$ satisfying the property that for any semihomogeneous bundle $\cE$ of rank $r$ on $X$, we have ${\rm reg}_{\rm cont}(\cE,\cO(1)) = \rho_{\eta}(c_{1}(\cE)/r).$
\end{thma}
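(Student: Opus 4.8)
The plan is to reduce the computation of $\crego{\cE}$ to a purely numerical question about the slope $\xi \deq c_1(\cE)/r \in N^1_\R(X)$, and then to read off both the definition of $\rho_\eta$ and its piecewise-constancy from the behaviour of Hermitian forms. Since $H^i$ vanishes for $i>\dim X$, only finitely many vanishing conditions are relevant near the regularity, so a single general $\alpha\in\widehat X$ computes $\reg{\cO(1)}{\cE\otimes\alpha}$ and verifies all of them at once; hence it suffices to decide, for each $i>0$ and each $k\in\Z$, whether
\[
H^i\big(\cE(k)\otimes\alpha\big)=0 \quad\text{for general } \alpha\in\widehat X .
\]
As $\cE(k)$ is again semihomogeneous with slope $\xi+k\eta$, everything will be governed by the numerical class $\xi+k\eta$.

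The key step — and the \textbf{main obstacle} — is an index theorem: for a semihomogeneous bundle $\cE$ of slope $\zeta$ and general $\alpha$, the group $H^i(\cE\otimes\alpha)$ should be nonzero for at most one $i$, namely when $\zeta$ is non-degenerate and $i$ equals the index $i(\zeta)$ (the number of negative eigenvalues of the Hermitian form of $\zeta$), while if $\zeta$ is degenerate then $H^i(\cE\otimes\alpha)=0$ for all $i$. To prove this I would invoke Mukai's structure theory \cite{Mu1}: a general semihomogeneous bundle of slope $\zeta$ admits a filtration by simple semihomogeneous bundles of the same slope, so by the long exact sequences it is enough to treat a simple $\cE$; for such an $\cE$ there is a separable isogeny $p\colon Y\to X$ and a line bundle $L$ on $Y$ with $c_1(L)=p^*\zeta$ such that $p^*\cE\cong L^{\oplus r}$. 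Using $p_*\cO_Y=\bigoplus_{\beta\in\ker\widehat p}\beta$ and the projection formula,
\[
\bigoplus_{\beta\in\ker\widehat p} H^i\big(X,\cE\otimes\alpha\otimes\beta\big)\;=\;H^i\big(Y,p^*(\cE\otimes\alpha)\big)\;=\;\bigoplus^{\,r} H^i\big(Y,L\otimes p^*\alpha\big),
\]
and the $\beta=0$ summand on the left recovers $H^i(X,\cE\otimes\alpha)$. Since $p^*\colon\widehat X\to\widehat Y$ is surjective, $p^*\alpha$ is a general point of $\widehat Y$; the right-hand side then vanishes for $i\neq i(\zeta)$ by Mumford's index theorem when $c_1(L)=p^*\zeta$ is non-degenerate, and for \emph{all} $i$ by generic vanishing for degenerate line bundles when $p^*\zeta$ is degenerate. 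This yields the dichotomy. That $H^{i(\zeta)}(\cE\otimes\alpha)$ is genuinely nonzero in the non-degenerate case follows because, for semihomogeneous $\cE$, $\operatorname{ch}(\cE)=r\exp(\zeta)$ (cf.\ \cite{Y}), whence $\chi(\cE\otimes\alpha)=\chi(\cE)=r\,\zeta^{\dim X}/(\dim X)!\neq 0$.

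With the dichotomy in hand, fix $\eta$ and diagonalize $\xi$ against the positive-definite form $\eta$, obtaining relative eigenvalues $\lambda_1\le\cdots\le\lambda_g$ with $g=\dim X$; then $\xi+t\eta$ has eigenvalues $\lambda_j+t$, so it is degenerate exactly when $-t\in\{\lambda_j\}$ and otherwise has index $\#\{j:\lambda_j<-t\}$. Applying the dichotomy to $\cE(m-i)$, the vanishing $H^i(\cE(m-i)\otimes\alpha)=0$ for general $\alpha$ fails precisely when $\xi+(m-i)\eta$ is non-degenerate of index $i$, i.e.\ when $\#\{j:\lambda_j<i-m\}=i$. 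I would therefore set
\[
\rho_\eta(\xi)\;\deq\;\min\Big\{m\in\Z \;:\; \#\{j:\lambda_j(\xi)<i-m\}\neq i \ \text{ or } \ \xi+(m-i)\eta \text{ is degenerate, for every } i>0\Big\},
\]
which exists because the set contains all $m\gg0$ (then $\xi+(m-i)\eta$ is ample, of index $0$, for all $i\le g$) and excludes all $m\ll0$. By construction $\rho_\eta(\xi)$ depends only on the $\lambda_j(\xi)$, hence only on $\xi$ and $\eta$, and by the dichotomy it equals $\crego{\cE}$ for every semihomogeneous $\cE$ of slope $\xi$; the monotonicity making this the genuine continuous regularity is the usual Mumford argument applied to $\cE\otimes\alpha$ \cite{Mus}.

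Finally, for piecewise-constancy: the $\lambda_j(\xi)$ are continuous (indeed algebraic) functions of $\xi$, and the defining conditions for $\rho_\eta$ change only when some $\lambda_j$ crosses an integer, i.e.\ when $\xi$ crosses one of the hypersurfaces $D_n\deq\{\xi:(\xi+n\eta)^{g}=0\}$ for $n\in\Z$, each the zero locus of a degree-$g$ polynomial on $N^1_\R(X)$. For $\xi$ in any bounded set the eigenvalues $\lambda_j(\xi)$ are bounded, so only finitely many $D_n$ meet it; thus $\{D_n\}_{n\in\Z}$ is locally finite, $\rho_\eta$ is constant on the connected components of the complement of $\bigcup_n D_n$, and refining the induced stratification exhibits $\rho_\eta$ as a piecewise-constant function on all of $N^1_\R(X)$.
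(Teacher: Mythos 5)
Your proof is correct and follows the same overall architecture as the paper's --- everything is reduced to the numerical slope $\zeta=c_{1}(\cE)/r$, the function $\rho_{\eta}$ is defined through the index of the classes $\zeta+(m-i)\eta$, and piecewise-constancy comes from the chamber structure cut out by the degeneracy hypersurfaces --- but the two key technical inputs are established by genuinely different arguments. Where the paper reduces to simple bundles via a delicate induction over unipotent extensions (Propositions \ref{lem:sh-nondeg} and \ref{prop:simp-uni}, which lean on Gulbrandsen's result that simple semihomogeneous bundles are WIT, and IT when nondegenerate), you prove the vanishing/non-vanishing dichotomy for the \emph{full} bundle at once: the vanishing of $H^{i}(\cE\otimes\alpha)$ for general $\alpha$ and $i\neq\iota(\zeta)$ needs only the easy direction of the long exact sequences along Mukai's filtration by simple pieces of the same slope, and the non-vanishing at $i=\iota(\zeta)$ is then forced by $\chi(\cE\otimes\alpha)=r\zeta^{\cdot g}/g!\neq 0$. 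This Euler-characteristic trick sidesteps the hardest step of the paper's reduction (the implication $V^{i}(\cE')=\widehat{X}\Rightarrow V^{i}(\cE'\otimes\cU)=\widehat{X}$ for $\cU$ unipotent), and is essentially the same mechanism the paper deploys only for simple bundles via Lemma \ref{lem:wit-deg}. Likewise you rederive the index function concretely, via relative eigenvalues against $\eta$ and Mumford's index theorem applied to the line bundle $\cL$ on the isogenous $Y$ with $\cE\cong p_{\ast}\cL$, where the paper simply cites Grieve. Two small inaccuracies, neither fatal: $p^{\ast}\cE$ is not $L^{\oplus r}$ but $p^{\ast}p_{\ast}\cL\cong\bigoplus_{\beta\in\ker\widehat{p}}\cL\otimes\beta$ --- harmless, since every summand has first Chern class $p^{\ast}\zeta$ and hence the same index and generic-vanishing behaviour; and your closing claim that ``refining the stratification'' makes $\rho_{\eta}$ piecewise-constant on all of $N^{1}_{\R}(X)$ is left vague, though the paper itself only establishes local constancy on the open dense set $U_{\eta}$.
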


It is important to note that we cannot replace ${\rm reg}_{\rm cont}(\cE,\cO(1))$ by ${\rm reg}(\cE,\cO(1))$ in this statement (see Remark \ref{rmk:reg-ineq}).  In addition, the function $\rho_{\eta}$ is almost completely determined by continuous CM-regularity, as every rational point in $N^{1}_{\R}(X)$ is of the form $c_{1}(\cE)/r$ for some positive integer $r$ and some semihomogeneous bundle $\cE$ of rank $r$ on $X$ (Proposition 6.22 of \cite{Mu1}).  Since the surjectivity of certain multiplication maps is an important consequence of CM-regularity, we should mention the numerical criterion for the surjectivity of multiplication maps associated to semihomogeneous bundles given in Theorem 6.13 of [PP3].

The proof of Theorem A, which is given in Section \ref{subsec:thm-a}, has two main steps.  First we reduce to the case where $\cE$ is simple; although simple semihomogeneous bundles are widely known to be the building blocks of general semihomogeneous bundles \cite{Mu1} this reduction is far from immediate, and is accomplished by Propositions \ref{lem:sh-nondeg} and \ref{prop:simp-uni}.  One crucial observation we rely on here is Proposition 6.3 of \cite{Gul} (restated here as Proposition \ref{prop:sh-it}) which says that a simple semihomogeneous bundle is a WIT-sheaf, and is an IT-sheaf when it is nondegenerate.

The next step is to define the function $\rho_{\eta}$ and show that ${\rm reg}_{\rm cont}(\cE,\cO(1)) = \rho_{\eta}(c_{1}(\cE)/r)$ when $\cE$ is simple and semihomogeneous.  To this end we use the index function for nondegenerate elements of $N^{1}_{\R}(X)$, which was studied by Grieve in \cite{Gr}.  While the definition of $\rho_{\eta}$ is a bit involved, its complexity is governed by the chamber structure on $N^{1}_{\R}(X)$ coming from the index function.

It is natural to consider how the continuous CM-regularity of semihomogeneous bundles on $X$ compares to that of general vector bundles.  Our second main result implies that when the first Chern class of a semihomogeneous bundle is proportional to $c_{1}(\cO(1)),$ its continuous CM-regularity is extremal.  

\begin{thmB}
If $X$ is an abelian variety of dimension $g \geq 1$ and $\cE$ is a vector bundle on $X,$
\begin{equation}
\label{eq:creg-vb}
{\rm reg}_{\rm cont}(\cE,\cO(1)) \leq \min\{m \in \Z : \cE(m-g)\textnormal{ is a GV-sheaf}\}
\end{equation}
and equality is attained if $\cE$ is semihomogeneous and $c_{1}(\cE) \in {\Q}c_{1}(\cO(1)).$
\end{thmB}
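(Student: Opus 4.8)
The plan is to establish the inequality \eqref{eq:creg-vb} for an arbitrary vector bundle by a twist-by-twist application of generic vanishing theory, and then to obtain equality in the semihomogeneous case by computing both sides directly after reducing to simple bundles exactly as in the proof of Theorem A. Throughout I set $V^i(\cF)=\{\alpha\in\widehat{X}:H^i(\cF\otimes\alpha)\neq0\}$, so that $\cF$ is a GV-sheaf precisely when $\codim_{\widehat{X}}V^i(\cF)\ge i$ for all $i$, and I recall that $\crego{\cF}\le m$ means $H^i(\cF(m-i)\otimes\alpha)=0$ for all $i>0$ and general $\alpha$.

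For the inequality, valid for any vector bundle $\cE$, set $m_0=\min\{m\in\Z:\cE(m-g)\text{ is GV}\}$; this is well defined because $\cE(m-g)$ is $\mathrm{IT}_0$, hence GV, for $m\gg0$, while $H^g(\cE(m-g)\otimes\alpha)\neq0$ for all $\alpha$ once $m\ll0$. The one external ingredient is the standard fact from generic vanishing theory that a GV-sheaf tensored with an ample line bundle is $\mathrm{IT}_0$. Fixing $m\ge m_0$ and writing $\cE(m-i)=\cE(m-g)\otimes\cO(g-i)$ for $1\le i\le g$, the case $i=g$ yields $\cE(m-g)$ itself, which is GV by the choice of $m$, so $\codim_{\widehat{X}}V^g(\cE(m-g))\ge g$ forces $H^g(\cE(m-g)\otimes\alpha)=0$ for general $\alpha$; for $1\le i<g$ the factor $\cO(g-i)$ is ample, so $\cE(m-g)\otimes\cO(g-i)$ is $\mathrm{IT}_0$ and $H^i(\cE(m-i)\otimes\alpha)=0$ for every $\alpha$. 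Thus $\cE\otimes\alpha$ is $m$-regular for general $\alpha$, and $\crego{\cE}\le m_0$.

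For the reverse inequality I reduce to a simple semihomogeneous bundle $\cE$ with $c_1(\cE)/r=s\eta$, $s\in\Q$, by means of Propositions \ref{lem:sh-nondeg} and \ref{prop:simp-uni}, both sides depending only on $c_1(\cE)/r$. The crucial point is that, since $\eta$ is a polarization, every rational multiple $\lambda\eta$ is positive- or negative-definite, so by \cite{Gr} the index function takes only the values $0$ (for $\lambda>0$) and $g$ (for $\lambda<0$). Consequently $\cE(m-g)$, which has slope $(s+m-g)\eta$, is by Proposition \ref{prop:sh-it} an $\mathrm{IT}_0$-sheaf when $s+m-g>0$ and an $\mathrm{IT}_g$-sheaf when $s+m-g<0$; in the degenerate case $s+m-g=0$ it becomes, after pullback along a suitable isogeny, a direct sum of numerically trivial line bundles, so its cohomological support loci are finite (\cite{Mu1}). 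In every case $\cE(m-g)$ is GV if and only if $V^g(\cE(m-g))\neq\widehat{X}$, as the only non-GV regime is $\mathrm{IT}_g$. Now take $m=\crego{\cE}$; the definition of regularity gives $H^g(\cE(m-g)\otimes\alpha)=0$ for general $\alpha$, i.e. $V^g(\cE(m-g))\neq\widehat{X}$, so $\cE(m-g)$ is GV and $m\ge m_0$. This yields $\crego{\cE}\ge m_0$, and together with the inequality above, $\crego{\cE}=m_0$.

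The principal obstacle is the generic-vanishing input behind the upper bound, namely that a GV-sheaf tensored with an ample line bundle is $\mathrm{IT}_0$; this is what promotes the lone codimension estimate on $V^g$ to genuine vanishing in each intermediate degree $i<g$. For the equality the essential structural fact is that multiples of the polarization class have definite signature, which confines the index of $\cE(m-g)$ to $\{0,g\}$ and thereby makes the GV property equivalent to the single condition $V^g(\cE(m-g))\neq\widehat{X}$; it is exactly here that the hypothesis $c_1(\cE)\in\Q\,c_1(\cO(1))$ enters, since an intermediate index would break this equivalence.
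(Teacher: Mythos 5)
Your argument is correct and follows essentially the same route as the paper: the upper bound is exactly Proposition \ref{prop:gen-ineq}, resting on the Pareschi--Popa tensor property (Proposition \ref{prop:mreg-it}) applied to $\cE(m-g)\otimes\cO(g-i)$, and the equality case uses the same reduction to simple semihomogeneous bundles and the same index trichotomy for twists of a bundle with $c_{1}\in\Q\,c_{1}(\cO(1))$. The only cosmetic difference is that you read off equality directly from the equivalence ``GV if and only if $V^{g}\neq\widehat{X}$'' in this regime, whereas the paper routes it through the nef characterization of Proposition \ref{prop:semihom-nef} and the formula of Theorem A.
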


The proof makes essential use of the tensorial properties of GV-sheaves developed in \cite{PP3}, and is contained in Section \ref{subsec:prop-b}; see Proposition \ref{prop:gen-ineq} for a precise description of the case of equality in (\ref{eq:creg-vb}).  

Returning to positivity considerations, Theorem 4.1 of \textit{loc.~ cit.} says that every GV-sheaf on an abelian variety is nef, and it is therefore natural to ask which GV-sheaves have minimal positivity.  The next result helps quantify this in terms of continuous CM-regularity.  In spite of the superficial similarity to Theorem B, the first Chern classes of the semihomogeneous bundles attaining equality are not assumed proportional to $c_{1}(\cO(1)).$

\begin{propC}
In the setting of Theorem B, if the vector bundle $\cE$ is a GV-sheaf, then 
\begin{equation}
\label{eq:gv-ineq}
{\rm reg}_{\rm cont}(\cE,\cO(1)) \leq g
\end{equation}
and equality is attained if $\cE$ is semihomogeneous and $c_{1}(\cE^{\vee}(1))$ is ample.
\end{propC}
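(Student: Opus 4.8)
The inequality \eqref{eq:gv-ineq} is an immediate consequence of Theorem~B. Taking $m = g$ on the right-hand side of \eqref{eq:creg-vb} produces the twist $\cE(g-g) = \cE$, which is a GV-sheaf by hypothesis; hence $g$ lies in the set $\{m \in \Z : \cE(m-g)\text{ is a GV-sheaf}\}$, so its minimum is at most $g$, and \eqref{eq:gv-ineq} follows at once.

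For the equality clause I would establish the reverse inequality $\crego{\cE} \ge g$ by showing that the top cohomology does not vanish generically. If $\crego{\cE} \le g-1$, then for general $\alpha \in \widehat{X}$ the regularity condition gives in particular (its $i=g$ instance) $H^g(\cE \otimes \alpha \otimes \cO(g-1-g)) = H^g(\cE \otimes \alpha \otimes \cO(-1)) = 0$, so it suffices to prove that this group is nonzero. Since $\omega_X \cong \cO_X$ on the abelian variety $X$, Serre duality gives
\[
H^g(\cE \otimes \alpha \otimes \cO(-1)) \cong H^0\!\left(\cE^\vee(1) \otimes \alpha^{-1}\right)^\vee,
\]
and the task reduces to showing that $\cF \deq \cE^\vee(1)$ satisfies $H^0(\cF \otimes \beta) \ne 0$ for every (equivalently, general) $\beta \in \widehat{X}$.

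To prove this I would show that $\cF$ is $\mathrm{IT}_0$ with $\chi(\cF) > 0$. The bundle $\cF = \cE^\vee \otimes \cO(1)$ is semihomogeneous, being the twist of the semihomogeneous bundle $\cE^\vee$ by a line bundle, and its first Chern class $c_1(\cF) = c_1(\cE^\vee(1))$ is ample by hypothesis. Thus the slope $c_1(\cF)/\rank(\cF)$ is ample, hence nondegenerate of index $0$ in the sense of Grieve's index function \cite{Gr}; Proposition~\ref{prop:sh-it} shows that a simple semihomogeneous bundle of nondegenerate slope is an IT-sheaf, and the vanishing of the index refines this to $\mathrm{IT}_0$. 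Writing a general semihomogeneous bundle of this slope as an iterated extension of simple ones and applying the long exact sequence in cohomology shows that $\cF$ itself is $\mathrm{IT}_0$. Since $\mathrm{ch}(\cF) = \rank(\cF)\, e^{c_1(\cF)/\rank(\cF)}$ and $\mathrm{td}(X) = 1$, we obtain $\chi(\cF) = \rank(\cF)^{1-g}\, c_1(\cF)^g / g! > 0$ from the ampleness of $c_1(\cF)$. Because $\cF$ is $\mathrm{IT}_0$ and elements of $\widehat{X}$ are numerically trivial, $h^0(\cF \otimes \beta) = \chi(\cF \otimes \beta) = \chi(\cF) > 0$ for all $\beta$, which supplies the required nonvanishing and closes the argument.

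The main obstacle is the middle step: confirming that $\cF$ is honestly $\mathrm{IT}_0$, rather than merely WIT of some positive index. This requires matching the cohomological index of $\cF$ with the index of its slope under Grieve's function, and reducing the possibly non-simple bundle $\cF$ to the simple case of Proposition~\ref{prop:sh-it}; I expect the reduction to proceed exactly as in Propositions~\ref{lem:sh-nondeg} and \ref{prop:simp-uni} from the proof of Theorem~A. Once $\mathrm{IT}_0$ is secured, the positivity of $\chi(\cF)$ and the nonvanishing of its twisted sections are purely numerical.
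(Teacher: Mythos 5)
Your proposal is correct and follows essentially the same route as the paper: the inequality is immediate from Theorem~B since $\cE$ itself is a GV-sheaf, and equality is obtained by Serre duality from the nonvanishing of $H^{0}(\cE^{\vee}(1)\otimes\beta)$ for all $\beta$, which forces $V^{g}(\cE(-1))=\widehat{X}$ and hence $\crego{\cE}\geq g$. The ``middle step'' you flag as the main obstacle (ample $c_{1}$ implies $\mathrm{IT}_0$, hence $V^{0}(\cE^{\vee}(1))=\widehat{X}$, via reduction to the simple case) is exactly the content of Proposition~\ref{prop:semihom-pos}, which the paper simply cites at this point.
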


If the GV hypothesis on $\cE$ is strengthened slightly, we get a correspondingly stronger statement (Proposition \ref{prop:it-index}).

When $X$ is a Jacobian, Theorem 1 of \cite{O} implies that the Verlinde bundle $\mathbf{E}_{r,k}$ for a rank-level pair $(r,k)$ with ${\rm gcd}(r,k)$ odd is semihomogeneous and ample, and is therefore a semihomogeneous GV-sheaf by Proposition \ref{prop:semihom-nef}.  In Section \ref{sec:verlinde} we calculate the continuous CM regularity of such an $\mathbf{E}_{r,k}$ with respect to a pluri-theta polarization; this attains equality in (\ref{eq:gv-ineq}) when $k/r$ is not too large (Proposition \ref{prop:verlinde}). 

In the last part of our paper (Section \ref{sec:products}) we study the continuous CM-regularity of vector bundles on products of non-isogenous abelian varieties, and provide an explicit formula in the case of two non-isogenous elliptic curves. 

\subsection*{Acknowledgements} This research started while the authors participated in the Combinatorial Algebraic Geometry Major Thematic Program at the Fields Institute; we thank its organizers for the motivating atmosphere.  We would also like to thank Mihnea Popa for useful discussions and helpful comments on an earlier draft. 

\subsection*{Notation and conventions} We work over the complex numbers.  Throughout, $X$ is a smooth projective variety and $\cO(1)$ is an ample and globally generated 
line bundle on $X.$

\section{Preliminaries}

\subsection{Positivity of Sheaves on Irregular Varieties} 
\label{subsec:pos-irreg}

We begin by introducing an object central to the study of coherent sheaves on irregular varieties.

\begin{defn}
If $\cF$ is a coherent sheaf on $X$ and $i \geq 0$, the $i-$th cohomological support locus of $\cF$ is
\[
 V^i(\cF) \deq \st{\alpha\in \Pic^0(X)\mid H^i(\cF\otimes\alpha)\neq 0}\ \ \ 
\]
For $i < 0$ we define $V^i(\cF) = \emptyset.$
\end{defn}

The following notion first appeared in \cite{PP1}, and was shown to imply ampleness in \cite{De}, solidifying the connection between the loci $V^{i}(\cF)$ and the positivity of $\cF.$

\begin{defn}
$\cF$ is $M-$regular if ${\rm codim}(V^{i}(\cF)) > i$ for all $i > 0.$
\end{defn}

Much of the power of $M-$regularity comes from \cite[Proposition 2.13]{PP1}, which says that $M-$regular sheaves satisfy the following property.

\begin{defn}
	$\cF$ is \textit{continuously globally generated} if there is a nonempty Zariski-open subset $U \subseteq {\rm Pic}^{0}(X)$ such that the evaluation map
	\begin{equation}
	\bigoplus_{\alpha \in U}H^{0}(\cF \otimes \alpha) \otimes \alpha^{\vee} \to \cF
	\end{equation}
	is surjective. 
\end{defn}

The next definition is slightly weaker than $M-$regularity, and yet plays a more important role in this paper; it was studied systematically in \cite{PP2,PP3}.  


\begin{defn}
$\cF$ is a \textit{GV-sheaf} if ${\rm codim}(V^{i}(\cF)) \geq i$ for all $i > 0.$
\end{defn}

The main theorem of \cite{GL} implies $\omega_{X}$ is a GV-sheaf when $X$ is of maximal Albanese dimension.  The ampleness of $M-$regular sheaves was later used in \cite{PP3} to prove that GV-sheaves on abelian varieties are all nef.

Continuous CM-regularity is a notion inspired by \cite{PP1} and is adapted to coherent sheaves on irregular varieties in that it links homological complexity to cohomological support loci.  For its basic properties we refer the reader to \cite{Mus}. 

\begin{defn}
\label{def:cont-reg}
The \emph{continuous CM-regularity} of $\cF$ with respect to $\cO(1)$ is 
 \[
  \crego{\cF} \deq \min\st{m\in\Z\mid \forall i>0\ \ V^i(\cF(m-i)) \neq \Pic^0(X)} \ 
 \]
\end{defn}

Note that $\crego{\cF} = \rego{\cF \otimes \alpha}$ for general $\alpha \in {\rm Pic}^{0}(X).$

\begin{rmk}
\label{rmk:reg-ineq}
In general we have that $\crego{\cF} \leq \rego{\cF}.$  To see that strict inequality is possible, observe that $\rego{\cO(1)}=g$ and $\crego{\cO(1)}=g-1.$
\end{rmk}

\subsection{IT- and WIT-sheaves}

We will assume for the rest of the paper that $X$ is an abelian variety.  In the next definition, $\cP$ is the normalized Poincar\'{e} bundle on $X \times \widehat{X}$ and $p,\widehat{p}$ are the projections from $X \times \widehat{X}$ to $X$ and $\widehat{X},$ respectively.

\begin{defn}
$\cF$ is a WIT-sheaf if $R^{i}\widehat{p}_{\ast}(p^{\ast}\cF \otimes \cP) \neq 0$ for at most one $i \in \{0, \cdots ,g\}.$  If $\cF$ is a WIT-sheaf and $R^{j}\widehat{p}_{\ast}(p^{\ast}\cF \otimes \cP) = 0$ for all $j \neq i$ we say that $i$ is the \textit{index} of $\cF$; this is denoted by ${\iota}(\cF).$
\end{defn}

The following lemma is important for the proof of Theorem A.

\begin{lem}
\label{lem:wit-deg}
If $\cF$ is a WIT-sheaf satisfying $\chi(\cF)=0,$ then $V^{j}(\cF) \neq \widehat{X}$ for all $j \geq 0.$
\end{lem}

\begin{proof}
Let $i={\iota}(\cF).$  Since $V^{j}(\cF) \neq \widehat{X}$ for $j \neq i$ it is enough to show that $V^{i}(\cF) \neq \widehat{X}$ where $i$ is the index of $\cF.$  If $\widehat{\cF} \in {\rm Coh}(\widehat{X})$ is the unshifted Fourier transform of $\cF,$ then by our hypothesis and Proposition 14.7.7 of \cite{BL} we have ${\rm rank}(\widehat{\cF}) = (-1)^{i}\chi(\cF) = 0.$  By Grauert's Theorem $H^{i}(\cF \otimes \alpha) =0$ for general $\alpha \in \widehat{X},$ i.e.~ $V^{i}(\cF) \neq \widehat{X}$ as desired.
 \end{proof}
 
\begin{defn}
$\cF$ is an IT-sheaf if $V^{i}(\cF) \neq \emptyset$ for at most one $i \in \{0, \cdots ,g\}.$  
\end{defn}

Since a nonzero IT-sheaf $\cF$ is a WIT-sheaf, we may speak of its index ${\iota}(\cF)$, which is the unique $i$ for which $V^{i}(\cF) \neq \emptyset.$  Note that any IT-sheaf of index 0 is $M-$regular (and in particular, a GV-sheaf).  The following result will be used in the proofs of Theorem B and Corollary C.

\begin{prop}
	\label{prop:mreg-it}
	\cite[Proposition 3.1]{PP3} If $X$ is an abelian variety, $\cF$ is a GV-sheaf on $X,$ and $\cH$ is a locally free sheaf on $X$ which is I.T. of index 0, then $\cF \otimes \cH$ is I.T. of index 0. \hfill \qedsymbol
\end{prop}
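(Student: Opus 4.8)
The plan is to deduce the $\textnormal{IT}_0$ property of $\cF\otimes\cH$ from the Fourier--Mukai characterization of GV-sheaves, converting the desired vanishing into the vanishing of ordinary sheaf cohomology in negative degrees on $\widehat{X}$. Write $R\widehat{S}(-)=R\widehat{p}_*(p^*(-)\otimes\cP)$ and $RS(-)=Rp_*(\widehat{p}^*(-)\otimes\cP)$ for the two transforms, which satisfy Mukai's inversion $RS\circ R\widehat{S}\cong(-1_X)^*[-g]$. First I would reduce to the case of the trivial character: since the GV condition is invariant under tensoring with elements of $\Pic^0(X)$ (the loci $V^i$ merely translate) and $\cH\otimes\alpha$ is again locally free and $\textnormal{IT}_0$, it suffices to prove $H^i(\cF\otimes\cH)=0$ for all $i>0$, for an arbitrary GV-sheaf $\cF$ and locally free $\textnormal{IT}_0$ sheaf $\cH$; applying this to $\cF\otimes\alpha$ in place of $\cF$ then gives $V^i(\cF\otimes\cH)=\emptyset$ for all $i>0$.

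Next I would dualize. Because $\omega_X\cong\cO_X$ and $\cH$ is locally free, Serre duality yields $H^i(\cF\otimes\cH)^{\vee}\cong\Ext^{g-i}(\cF\otimes\cH,\cO_X)\cong\mathbb{H}^{g-i}\!\big(X,(R\Delta\cF)\otimes\cH^{\vee}\big)$, where I abbreviate $R\Delta\cF:=R\sHom(\cF,\cO_X)$. Thus it is enough to show $\mathbb{H}^{j}\!\big(X,(R\Delta\cF)\otimes\cH^{\vee}\big)=0$ for every $j<g$. Two structural inputs feed in. First, the Pareschi--Popa characterization of GV-sheaves says that $\cF$ is GV if and only if $R\Delta\cF$ is $\textnormal{WIT}_g$, i.e.\ $R\widehat{S}(R\Delta\cF)$ is a single sheaf $\cN$ placed in cohomological degree $g$; by inversion this is equivalent to $R\Delta\cF\cong(-1_X)^*RS(\cN)$. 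Second, applying Serre duality to the $\textnormal{IT}_0$ hypothesis shows $\cH^{\vee}$ is $\textnormal{IT}_g$, and since $\cH$ is $\textnormal{IT}_0$ its transform $\widehat{\cH}:=R^0\widehat{S}(\cH)$ is locally free, so that $R\widehat{S}(\cH^{\vee})\cong(-1_{\widehat{X}})^*\widehat{\cH}^{\vee}[-g]$ with $\widehat{\cH}^{\vee}$ locally free.

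Finally I would carry out the exchange. Applying the projection formula twice on $X\times\widehat{X}$ gives the identity $R\Gamma\big(X,RS(\cN)\otimes\cG\big)\cong R\Gamma\big(\widehat{X},\cN\otimes R\widehat{S}(\cG)\big)$ for $\cG\in\mathbf{D}(X)$. Substituting $R\Delta\cF\cong(-1_X)^*RS(\cN)$ and taking $\cG=(-1_X)^*\cH^{\vee}$, and using $R\widehat{S}\circ(-1_X)^*\cong(-1_{\widehat{X}})^*\circ R\widehat{S}$ together with the formula for $R\widehat{S}(\cH^{\vee})$ above (so the two $(-1)^*$ twists cancel), this collapses to $\mathbb{H}^{j}\!\big(X,(R\Delta\cF)\otimes\cH^{\vee}\big)\cong H^{\,j-g}\!\big(\widehat{X},\cN\otimes\widehat{\cH}^{\vee}\big)$. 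Since $\cN$ is a sheaf and $\widehat{\cH}^{\vee}$ is locally free, $\cN\otimes\widehat{\cH}^{\vee}$ is an honest sheaf, whose cohomology vanishes in all negative degrees; for $j<g$ the right-hand side lives in degree $j-g<0$ and therefore vanishes. This gives $H^i(\cF\otimes\cH)=0$ for all $i>0$ and, after the initial reduction, the desired $\textnormal{IT}_0$ property.

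I expect the main obstacle to lie in the bookkeeping of the last paragraph: correctly invoking the $\textnormal{WIT}_g$ characterization of GV-sheaves and threading the Fourier--Mukai inversion and duality isomorphisms so that all cohomological shifts and $(-1)^*$ twists cancel exactly as claimed, and verifying that the relevant tensor product $\cN\otimes\widehat{\cH}^{\vee}$ is underived. Once the exchange isomorphism is set up cleanly, the conclusion is immediate from the triviality of negative-degree sheaf cohomology, so essentially all of the content is in establishing that isomorphism.
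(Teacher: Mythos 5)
Your argument is correct, and all the pieces check out: the reduction to $\alpha=\cO_X$, the Serre-duality step $H^i(\cF\otimes\cH)^\vee\cong\mathbb{H}^{g-i}(X,R\Delta\cF\otimes\cH^\vee)$, the WIT$_g$ characterization of GV-sheaves, the duality formula $R\widehat{S}(\cH^\vee)\cong(-1)^*\widehat{\cH}^\vee[-g]$, and the projection-formula exchange all hold with the shifts and $(-1)^*$ twists cancelling as you claim. Note, however, that the paper offers no proof of this statement --- it is quoted verbatim from [PP3, Proposition 3.1] --- and your reconstruction is essentially Pareschi--Popa's original argument (they phrase the exchange as an adjunction of $\Ext$-groups under the Fourier--Mukai equivalence rather than via the projection formula, but the computation is the same).
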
 





\subsection{Semihomogeneous Bundles}

Semihomogeneous vector bundles are important building blocks of coherent sheaves generalizing line bundles and indecomposable vector bundles on elliptic curves at the same time. Here we summarize the information we will need about them. 

\begin{defn}
A vector bundle $\cE$ on $X$ is \textit{semihomogeneous} if for all $x \in X$ there exists $\alpha \in \widehat{X}$ such that $t_{x}^{\ast}\cE \cong \cE \otimes \alpha.$
\end{defn}


\begin{prop}
\label{prop:semi-decomp}
Let $\cE$ be a semihomogeneous bundle on $X.$  Then there exist simple semihomogeneous bundles $\cE_{1}, \cdots \cE_{s}$ and indecomposable unipotent bundles $\cU_{1}, \cdots ,\cU_{s}$ on $X$ such that
\begin{equation}
\label{eq:sh-decomp}
\cE \cong \bigoplus_{j=1}^{s}\cE_{j} \otimes \cU_{j}
\end{equation}
\end{prop}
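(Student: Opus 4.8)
The plan is to prove the decomposition \eqref{eq:sh-decomp} by reducing the structure theory of a general semihomogeneous bundle to that of simple semihomogeneous bundles tensored with unipotent bundles. My starting point would be Mukai's classification results from \cite{Mu1}. The key structural fact I want to invoke is that any semihomogeneous bundle $\cE$ is \emph{$\gamma$-semistable} in Mukai's sense, and that the Jordan--H\"{o}lder constituents of such a bundle, with respect to the appropriate slope, are all simple semihomogeneous bundles whose Chern data are proportional (so that they all share the same slope $c_1/r$). I would first record that a semihomogeneous bundle admits a Jordan--H\"{o}lder-type filtration whose graded pieces are simple semihomogeneous.

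\medskip

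First I would establish that $\cE$ decomposes according to the distinct simple semihomogeneous ``types'' appearing in it. Concretely, two simple semihomogeneous bundles $\cE_j$, $\cE_{j'}$ are of the \emph{same type} if one is obtained from the other by twisting by an element of $\widehat{X}$; equivalently, after applying the Fourier--Mukai transform, their supports are translates on the dual side. The crucial homological input is the computation of $\Hom$ and $\Ext$ groups between simple semihomogeneous bundles: $\Hom(\cE_j, \cE_{j'})$ and the higher $\Ext$'s vanish unless $\cE_j$ and $\cE_{j'}$ are of the same type, in which case $\End(\cE_j) = \C$ by simplicity. This orthogonality lets me split $\cE$ into a direct sum indexed by the finitely many types present.

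\medskip

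Next, for a fixed type I would analyze the summand supported on that type. Here the idea is that the endomorphism algebra of the ``model'' simple bundle $\cE_j$ is a field (namely $\C$), and the self-extensions $\Ext^{\bullet}(\cE_j,\cE_j)$ form an exterior algebra on $g$ generators, matching the cohomology of $\widehat{X}$ near the relevant point. Consequently the category of semihomogeneous bundles of a single type is equivalent (via Fourier--Mukai, or via Atiyah's original argument on elliptic curves extended by Mukai) to the category of coherent sheaves supported at a single point of an abelian variety, i.e.\ to finite-dimensional modules over a local Artinian ring. The indecomposable objects in that local category correspond precisely to the indecomposable unipotent bundles $\cU_j$; tensoring back by $\cE_j$ recovers the summand as $\cE_j \otimes \cU_j$. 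Assembling over all types yields \eqref{eq:sh-decomp}.

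\medskip

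I expect the main obstacle to be the identification of the single-type summand with $\cE_j \otimes \cU_j$ for an \emph{indecomposable unipotent} bundle $\cU_j$, rather than with some more general sheaf of the same type. The delicate point is showing that every bundle of a fixed type is, up to the Fourier--Mukai equivalence, pulled back from a homogeneous (translation-invariant) structure and hence built from $\cO_X$ by successive extensions --- this is what forces the auxiliary factor to be unipotent. I would handle this by using that the Fourier--Mukai transform carries $\cE_j$ to a skyscraper-type sheaf and carries the whole type-$j$ summand to a sheaf set-theoretically supported at one point, whose scheme structure is governed by the nilpotents; the indecomposable such sheaves transform back to $\cE_j \otimes \cU_j$ with $\cU_j$ indecomposable unipotent. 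The bookkeeping that the \emph{same} index $s$ can be used (i.e.\ that one may pair each $\cE_j$ with a single $\cU_j$) is merely a matter of grouping isomorphic factors after the decomposition, and is not essential to the statement.
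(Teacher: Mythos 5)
The paper does not actually prove this statement: it is recalled from Mukai's classification (\cite{Mu1}, Propositions 6.15--6.18) in a section whose stated purpose is to ``summarize the information we will need,'' and no proof is given. So the comparison below is between your sketch and Mukai's original argument, which is what the paper is implicitly leaning on.

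Your overall architecture --- Jordan--H\"{o}lder factors are simple semihomogeneous of a common slope, blocks indexed by the simple constituents are mutually orthogonal under $\Hom$ and $\Ext$, and each block is governed by the self-extension algebra $\Ext^{\bullet}(\cE_{j},\cE_{j})\cong\Lambda^{\bullet}H^{1}(\cO_{X})$ --- is sound; in fact the orthogonality holds in the stronger form $\Ext^{\bullet}(\cE_{j},\cE_{j}\otimes\alpha)=0$ unless $\cE_{j}\otimes\alpha\cong\cE_{j}$, so the blocks are finer than your ``types.'' The genuine gap is in the step you yourself flag as delicate: you assert that the Fourier--Mukai transform carries $\cE_{j}$ to a skyscraper-type sheaf and the type-$j$ summand to a finite-length sheaf at a point. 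For the standard Poincar\'{e} kernel this is false: by Proposition \ref{prop:sh-it} a nondegenerate simple semihomogeneous bundle is IT of some index $i$, and its transform is a \emph{vector bundle} of rank $|\chi(\cE_{j})|$ on $\widehat{X}$ (again simple semihomogeneous), not a torsion sheaf. Realizing $\cE_{j}$ as a ``point'' requires a different derived equivalence adapted to the slope $c_{1}(\cE_{j})/r$ (in the spirit of Polishchuk--Orlov), and constructing it is essentially as hard as the statement being proved. Relatedly, an isomorphism of $\Ext$-algebras does not by itself upgrade to an equivalence between the extension-closed subcategory generated by $\cE_{j}$ and finite-length modules over a local Artinian ring without a formality or explicit-functor argument. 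Mukai's route sidesteps all of this: writing $\cE_{j}\cong p_{\ast}\cL$ for an isogeny $p$ (Proposition \ref{prop:semihom-dir}), he pulls $\cE$ back along a suitable isogeny and twists by a line bundle so as to reduce to the classical structure theorem for \emph{homogeneous} bundles --- every homogeneous bundle is $\bigoplus_{i}P_{i}\otimes\cU_{i}$ with $P_{i}\in\Pic^0(X)$ and $\cU_{i}$ unipotent --- and then descends that decomposition to obtain (\ref{eq:sh-decomp}). To complete your sketch you should either substitute this isogeny-and-descent argument for the Fourier--Mukai step, or supply the nonstandard equivalence explicitly.
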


\begin{prop}
\label{prop:semihom-dir}
(Theorem 5.8, \cite{Mu1}) Let $\cE$ be a simple vector bundle on $X.$  Then $\cE$ is semihomogeneous if and only if there exists an abelian variety $Z,$ an isogeny $p : Z \to X$ and a line bundle $\cL$ on $Z$ such that $\cE \cong p_{\ast}\cL.$
\end{prop}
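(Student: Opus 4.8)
The statement is Mukai's structure theorem for simple semihomogeneous bundles, and the natural plan is to treat the two implications separately, with the forward direction carrying essentially all of the difficulty.

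\textbf{The ``if'' direction.} Here I would prove the stronger (simplicity-free) assertion that $p_{\ast}\cL$ is semihomogeneous for \emph{any} isogeny $p\colon Z\to X$ and any line bundle $\cL$ on $Z$. Fix $a\in X$ and, using surjectivity of $p$, choose $b\in Z$ with $p(b)=a$. Since $p$ is a homomorphism, translation by $b$ covers translation by $a$, producing a Cartesian square whose flat base change gives $t_{a}^{\ast}p_{\ast}\cL\cong p_{\ast}t_{b}^{\ast}\cL$. By the theorem of the square $t_{b}^{\ast}\cL\cong\cL\otimes\beta$ with $\beta=t_{b}^{\ast}\cL\otimes\cL^{-1}\in\widehat{Z}$. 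Because the dual of an isogeny is again an isogeny, the map $p^{\ast}\colon\widehat{X}\to\widehat{Z}$ is surjective, so $\beta=p^{\ast}\alpha$ for some $\alpha\in\widehat{X}$, and the projection formula yields $p_{\ast}(\cL\otimes p^{\ast}\alpha)\cong(p_{\ast}\cL)\otimes\alpha$. Chaining these isomorphisms gives $t_{a}^{\ast}(p_{\ast}\cL)\cong(p_{\ast}\cL)\otimes\alpha$, which is exactly semihomogeneity.

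\textbf{Constructing the isogeny for the converse.} For the ``only if'' direction I would build the isogeny out of $\cE$ itself. Introduce the subgroup $\Sigma(\cE)=\{(a,\alpha)\in X\times\widehat{X}\,:\,t_{a}^{\ast}\cE\cong\cE\otimes\alpha\}$; a short cocycle computation (composing translations and twists) shows this is a closed algebraic subgroup of $X\times\widehat{X}$. By definition, semihomogeneity of $\cE$ is equivalent to surjectivity of the first projection $\Sigma(\cE)\to X$, and the kernel of this projection is $\{0\}\times\{\alpha:\cE\otimes\alpha\cong\cE\}$, which is finite because $\cE$ has finite rank. Hence $\Sigma(\cE)$ has dimension $g$; taking $Z$ to be its identity component and $p\colon Z\to X$ the first projection produces an isogeny (its image is a finite-index closed subgroup of the connected group $X$, hence all of $X$, and its kernel is finite).

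\textbf{Producing the line bundle (the main obstacle).} The remaining and genuinely hard step is to exhibit a line bundle $\cL$ on $Z$ with $p_{\ast}\cL\cong\cE$, and this is where simplicity is indispensable. The strategy is to analyze the pullback $p^{\ast}\cE$: the second-coordinate data of $\Sigma(\cE)$ supply, over $Z$, canonical homogeneity isomorphisms $t_{b}^{\ast}\cE\cong\cE\otimes\alpha(b)$ which, after twisting by the restriction to $Z$ of the Poincar\'e bundle $\cP$, rigidify $p^{\ast}\cE$ into a direct sum of mutually algebraically equivalent line bundles. Simplicity, through $\End(\cE)=\C$, forces these summands to form a single transitive orbit under the deck group $\ker p$, so that $p^{\ast}\cE\cong\bigoplus_{c\in\ker p}t_{c}^{\ast}\cL$ for one line bundle $\cL$; descent along the finite flat morphism $p$ (equivalently, adjunction for $p$) then identifies $\cE$ with $p_{\ast}\cL$. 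The main obstacle is precisely this rigidification: one must choose the homogeneity isomorphisms compatibly, which is a normalization/cocycle problem, so that $p^{\ast}\cE$ genuinely splits as a sum of line bundles rather than merely being semihomogeneous of the prescribed slope, and one must verify that the descended pushforward recovers $\cE$ exactly. Throughout, the identities $\rank(p_{\ast}\cL)=\deg p$ and $\deg p=r$ serve as consistency checks on the construction.
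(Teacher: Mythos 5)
The paper offers no proof of this statement: it is imported wholesale from Mukai ([Mu1], Theorem 5.8) and used as a black box, so your proposal has to stand on its own. Your ``if'' direction is fine and complete: flat base change along the square $p\circ t_{b}=t_{a}\circ p$, the theorem of the square, surjectivity of $p^{\ast}\colon\widehat{X}\to\widehat{Z}$, and the projection formula do chain together to give $t_{a}^{\ast}(p_{\ast}\cL)\cong(p_{\ast}\cL)\otimes\alpha$, and this half indeed needs no simplicity.

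The ``only if'' direction, however, contains a concrete error on top of the gap you already acknowledge: the isogeny you construct has the wrong degree. Take $X$ an elliptic curve and $\cE$ simple of rank $r$ and degree $d$ with $\gcd(r,d)=1$. Such bundles are determined by their determinant, so $\Sigma(\cE)=\{(x,\alpha):r\alpha=d\lambda(x)\}$ with $\lambda$ the principal polarization; its identity component is the image of $y\mapsto(ry,d\lambda(y))$, on which $\mathrm{pr}_{1}$ is multiplication by $r$, of degree $r^{2}$. Since ${\rm rank}(p_{\ast}\cL)=\deg p$, your own consistency check fails: $p_{\ast}\cL$ would have rank $r^{2}$, and dually $p^{\ast}\cE$, which has rank $r$, cannot possibly split into $\deg p=r^{2}$ line-bundle summands permuted transitively by $\ker p$. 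The correct $Z$ is not $\Sigma(\cE)^{0}$ but a quotient of it by a subgroup of order $r$ inside the $r^{2}$-element kernel of $\mathrm{pr}_{1}$ --- and not an arbitrary such subgroup: it must be maximal isotropic for the commutator pairing of the theta group attached to the homogeneity isomorphisms of $\cE$. That isotropy is precisely what kills the cocycle obstruction you flag as ``the main obstacle'' and lets $p^{\ast}\cE$ rigidify into a single $\ker p$-orbit of line bundles; choosing the Lagrangian subgroup and verifying that the resulting $p_{\ast}\cL$ is simple of rank $r$ and isomorphic to $\cE$ is the actual content of Mukai's proof. As written, your argument identifies the right ambient group but neither produces the right isogeny nor closes the splitting step, so the hard implication remains unproved.
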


The following statement is Proposition 6.2 of \cite{Gul}.  Since we will be referring to different parts of it throughout different proofs, we list it here for reference; see \textit{loc.~ cit.~} for the proof.

\begin{prop}
\label{prop:sh-it}
If $\cE$ is a simple semihomogeneous bundle on $X$, the following properties hold:
\begin{itemize}
\item[(i)]{$\cE$ is a WIT-sheaf, and ${\iota}(\cE)={\iota}(\det(\cE)).$}
\item[(ii)]{If $\chi(\cE) \neq 0,$ then $\cE$ is an IT-sheaf. \hfill \qedsymbol}
\item[(iii)]{$\chi(\cE) \neq 0$ if and only if $\chi(\det(\cE)) \neq 0.$}
\end{itemize}
\end{prop}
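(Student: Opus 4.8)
The plan is to use Mukai's structure theorem (Proposition \ref{prop:semihom-dir}) to write $\cE\cong p_*\cL$ for an isogeny $p:Z\to X$ and a line bundle $\cL$ on $Z$, thereby reducing every assertion to a statement about the line bundles $\cL$ and $\det(\cE)$, for which the Fourier--Mukai theory of line bundles on an abelian variety is classical. Throughout I write $\delta=c_1(\cE)/r\in N^{1}_{\Q}(X)$ and use the formula $\mathrm{ch}(\cE)=r\,e^{\delta}$ for the Chern character of a semihomogeneous bundle (Theorem 5.12 of \cite{Y}, cited in the introduction). Note that comparing ranks gives $\deg p=r$, and comparing Chern characters via $\mathrm{ch}(p_*\cL)=p_*\mathrm{ch}(\cL)$ (the map $p$ is \'etale) together with $p^*p_*=\sum_{a\in\ker p}t_a^*$, which acts as multiplication by $r$ on the $\ker p$-invariant class $c_1(\cL)$, yields $c_1(\cL)=p^*\delta$.

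First I would dispose of (iii) and (ii) by a numerical computation. Since the Todd class of an abelian variety is trivial, the formula for $\mathrm{ch}(\cE)$ gives $\chi(\cE)=\int_X r\,e^{\delta}=\tfrac{r}{g!}\int_X\delta^{g}$, while $\det(\cE)$ has $c_1(\det\cE)=c_1(\cE)=r\delta$, so $\chi(\det\cE)=\int_X e^{r\delta}=\tfrac{r^{g}}{g!}\int_X\delta^{g}$. Both are nonzero rational multiples of $\int_X\delta^{g}$, which proves (iii). For (ii), $\chi(\cE)\neq 0$ forces $\int_X\delta^{g}\neq 0$, i.e.\ $\delta$ (equivalently $c_1(\cL)=p^*\delta$) is nondegenerate, so $\cL$ is an IT-sheaf by Mumford's index theorem. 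The projection formula and finiteness of $p$ then give $H^i(\cE\otimes\beta)=H^i(Z,\cL\otimes p^*\beta)$ for every $\beta\in\widehat X$, which vanishes for all $i\neq\iota(\cL)$; hence $\cE$ is IT of index $\iota(\cL)$.

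For (i) the key input is the compatibility of the Fourier--Mukai transform with isogenies. Writing $\Phi_X,\Phi_Z$ for the transforms on $X,Z$ and $\widehat p:\widehat X\to\widehat Z$ for the dual isogeny, the Poincar\'e identity $(p\times\mathrm{id})^*\cP_X\cong(\mathrm{id}\times\widehat p)^*\cP_Z$ together with flat base change and the projection formula yields a natural isomorphism $\Phi_X(p_*\cL)\cong\widehat{p}^{\,*}\Phi_Z(\cL)$. Because $\widehat p$ is an isogeny, $\widehat{p}^{\,*}$ is exact, so $\Phi_X(\cE)$ is concentrated in a single degree exactly when $\Phi_Z(\cL)$ is, and with the same index. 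Thus (i) reduces to the classical facts that every line bundle on an abelian variety is a WIT-sheaf and that its index is $\iota(\cL)=g-n_+(c_1(\cL))$, where $n_+$ is the number of positive eigenvalues of the associated Hermitian form. Since $n_+$ is unchanged by the positive scaling $\delta\mapsto r\delta$ and by the isogeny pullback $\delta\mapsto p^*\delta$ (pullback of an ample class along an isogeny is ample, so the signature is preserved), we obtain $\iota(\cE)=\iota(\cL)=g-n_+(p^*\delta)=g-n_+(\delta)=g-n_+(r\delta)=\iota(\det\cE)$, which is the remaining claim in (i).

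The main obstacle is the degenerate case of the line-bundle facts underlying (i): showing that a line bundle with degenerate first Chern class is still WIT and that its index obeys $\iota=g-n_+$. The nondegenerate case is immediate from Mumford's index theorem, but in general one must invoke the structure theory of degenerate line bundles --- passing to the connected component $K(\cL)^0$ of $\ker\phi_{\cL}$, on which $\cL$ is algebraically trivial, and descending $\cL$ up to isogeny to an external product of a nondegenerate bundle with an algebraically trivial one --- after which the transform decomposes as an external product concentrated in a single degree. Pinning down the precise form of the isogeny-compatibility isomorphism $\Phi_X(p_*\cL)\cong\widehat{p}^{\,*}\Phi_Z(\cL)$, including the correct identification of the dual isogeny and the base-change square, is the other technical point requiring care.
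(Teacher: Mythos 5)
The paper gives no internal proof of this statement: it is quoted from \cite{Gul} (Proposition 6.2 there) with the proof explicitly deferred to \emph{loc.~cit.}, so there is nothing in the text to compare your argument against line by line. Your reconstruction is sound and follows what is essentially the standard route (and the one taken in the cited source): write $\cE \cong p_{\ast}\cL$ via Proposition \ref{prop:semihom-dir}, deduce $c_{1}(\cL) = p^{\ast}\delta$ and $\deg p = r$, get (iii) and (ii) from Riemann--Roch and Mumford's index theorem together with the projection formula $H^{i}(\cE\otimes\beta)\cong H^{i}(Z,\cL\otimes p^{\ast}\beta)$, and get (i) from the exchange formula $\Phi_{X}(p_{\ast}\cL)\cong \widehat{p}^{\,\ast}\Phi_{Z}(\cL)$ (exactness and faithfulness of $\widehat{p}^{\,\ast}$) plus the index formula $\iota = g - n_{+}$ for line bundles; the signature bookkeeping $n_{+}(p^{\ast}\delta) = n_{+}(\delta) = n_{+}(r\delta)$ correctly yields $\iota(\cE) = \iota(\det\cE)$. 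The one genuine gap is the one you flag yourself: for a \emph{degenerate} line bundle, neither the WIT property nor the formula $\iota = g - n_{+}$ follows from Mumford's index theorem, and you need to actually carry out (or cite, e.g.\ \cite{Mu2} or \cite{BL}) the descent of $\cL\otimes\alpha_{0}$ along $Z \to Z/K(\cL)^{0}$ together with Poincar\'e reducibility to reduce to an external product of a nondegenerate bundle with a homogeneous one; note this case is unavoidable for part (i), since that is precisely the case $\chi(\cE)=0$ excluded from (ii). As written your treatment of that step is an outline rather than a proof, but the outline is the correct one and the remaining details are classical.
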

 
\section{Proofs of Main Results}

\subsection{Proof of Theorem A}
\label{subsec:thm-a}

Here we establish the central result of the paper saying that continuous regularity is a numerical property for semihomogeneous vector bundles. 

We begin with a reduction step to the case of simple bundles.

\begin{prop}
\label{lem:sh-nondeg}
Let $\cE$ be a simple semihomogeneous bundle on $X$ and let $\cU$ be a unipotent bundle on $X$.  Then for all $i \geq 0$ we have that $V^{i}(\cE) = \widehat{X}$ if and only if $V^{i}(\cE \otimes \cU) = \widehat{X}.$
\end{prop}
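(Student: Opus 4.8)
The plan is to exploit the fact that a unipotent bundle $\cU$ of rank $n$ admits a filtration $0 = \cU_{0} \subset \cU_{1} \subset \cdots \subset \cU_{n} = \cU$ with each quotient $\cU_{k}/\cU_{k-1} \cong \cO_{X}$. Tensoring by $\cE \otimes \alpha$ for $\alpha \in \widehat{X}$ produces a filtration of $\cE \otimes \cU \otimes \alpha$ whose successive quotients are all isomorphic to $\cE \otimes \alpha$. Two consequences follow immediately: first, additivity of Euler characteristics gives $\chi(\cE \otimes \cU) = n\,\chi(\cE)$, so $\chi(\cE \otimes \cU)$ vanishes exactly when $\chi(\cE)$ does; second, the long exact sequences attached to the filtration let me propagate cohomological vanishing from $\cE \otimes \alpha$ to $\cE \otimes \cU \otimes \alpha$. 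The point I would stress is that the naive use of these sequences only yields one implication cheaply, namely $V^{i}(\cE) \neq \widehat{X} \Rightarrow V^{i}(\cE \otimes \cU) \neq \widehat{X}$, so the real content is to understand precisely which loci $V^{i}(\cE)$ can equal $\widehat{X}$.

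That understanding comes from splitting into cases according to whether $\chi(\cE)$ vanishes, using Proposition \ref{prop:sh-it}. Suppose first $\chi(\cE) \neq 0$. Then $\cE$ is an IT-sheaf of index $i_{0} = \iota(\cE)$, so $H^{j}(\cE \otimes \alpha) = 0$ for every $\alpha$ and every $j \neq i_{0}$; since $\chi(\cE \otimes \alpha) = \chi(\cE)$ equals $\pm \dim H^{i_{0}}(\cE \otimes \alpha)$ and is nonzero, the group $H^{i_{0}}(\cE \otimes \alpha)$ is nonzero for all $\alpha$, whence $V^{i}(\cE) = \widehat{X}$ holds precisely when $i = i_{0}$. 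Feeding the vanishing $H^{j}(\cE \otimes \alpha) = 0$ for $j \neq i_{0}$ through the long exact sequences of the filtration shows $H^{j}(\cE \otimes \cU \otimes \alpha) = 0$ for all $\alpha$ and all $j \neq i_{0}$; combined with $\chi(\cE \otimes \cU) = n\chi(\cE) \neq 0$, this forces $H^{i_{0}}(\cE \otimes \cU \otimes \alpha) \neq 0$ for every $\alpha$, so likewise $V^{i}(\cE \otimes \cU) = \widehat{X}$ exactly when $i = i_{0}$. The two conditions therefore coincide.

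In the remaining case $\chi(\cE) = 0$, Proposition \ref{prop:sh-it}(i) says $\cE$ is a WIT-sheaf, so Lemma \ref{lem:wit-deg} gives $V^{j}(\cE) \neq \widehat{X}$ for all $j$. As each $V^{j}(\cE)$ is closed and there are only finitely many of them, a general $\alpha$ lies outside $\bigcup_{j} V^{j}(\cE)$ and hence satisfies $H^{j}(\cE \otimes \alpha) = 0$ for all $j$ simultaneously; the filtration then yields $H^{j}(\cE \otimes \cU \otimes \alpha) = 0$ for all $j$ at such $\alpha$, so $V^{j}(\cE \otimes \cU) \neq \widehat{X}$ for every $j$ as well. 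Thus both sides of the asserted equivalence fail for all $i$, and the biconditional holds vacuously. I expect the main obstacle to be conceptual rather than computational: one must resist comparing $V^{i}(\cE)$ and $V^{i}(\cE \otimes \cU)$ directly through the filtration, and instead first invoke the IT/WIT dichotomy to learn that the cohomology of $\cE \otimes \alpha$ concentrates in a single degree, after which both support loci are governed entirely by the single numerical quantity $\chi(\cE)$.
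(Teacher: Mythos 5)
Your proof is correct, and it takes a genuinely different route from the paper's. The paper argues by induction on the rank of $\cU$, peeling off one trivial quotient at a time via an extension $0 \to \cO_{X} \to \cU \to \cU' \to 0$ and manipulating set-theoretic inclusions among $V^{i}(\cE)$, $V^{i}(\cE \otimes \cU')$ and $V^{i}(\cE \otimes \cU)$; the decisive input there is the same as yours, namely that $V^{i}(\cE) = \widehat{X}$ forces $\chi(\cE) \neq 0$ by Lemma \ref{lem:wit-deg} and hence $\cE$ is IT by Proposition \ref{prop:sh-it}. You instead make the dichotomy on $\chi(\cE)$ the organizing principle from the outset and treat the whole filtration at once: when $\chi(\cE) \neq 0$ the cohomology of every $\cE \otimes \alpha$ is concentrated in the single degree $i_{0} = \iota(\cE)$, the long exact sequences concentrate the cohomology of $\cE \otimes \cU \otimes \alpha$ in degree $i_{0}$ as well, and nonvanishing there is read off from $\chi(\cE \otimes \cU) = n\chi(\cE) \neq 0$; when $\chi(\cE) = 0$ both sides of the biconditional fail for every $i$. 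Your version buys a stronger conclusion as a by-product --- in the nondegenerate case each $V^{i}(\cE \otimes \cU)$ is either empty or all of $\widehat{X}$, i.e.\ $\cE \otimes \cU$ is again IT of index $\iota(\cE)$ --- and it avoids the slightly delicate inclusion $V^{i}(\cE)-V^{i-1}(\cE \otimes \cU') \subseteq V^{i}(\cE \otimes \cU)$ on which the paper's induction rests. The only point worth making explicit in your degenerate case is that the $V^{j}(\cE)$ are Zariski-closed and $\widehat{X}$ is irreducible, so that a finite union of proper closed loci is still proper and a suitable $\alpha$ avoiding all of them exists; with that said, the argument is complete.
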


\begin{proof}
We proceed by induction on $r := {\rm rank}(\cU).$  The case $r = 1$ is immediate.  If $r \geq 2,$ then $\cU$ can be expressed as an extension
\begin{equation}
0 \to \cO_{X} \to \cU \to {\cU}' \to 0
\end{equation}
where  ${\cU}'$ is a unipotent bundle of rank $r-1$ on $X.$  Fixing $i \geq 0$ we have
\begin{equation}
\label{eq:sh-incl}
V^{i}(\cE)-V^{i-1}(\cE \otimes {\cU}') \subseteq V^{i}(\cE \otimes \cU) \subseteq V^{i}(\cE) \cup V^{i}(\cE \otimes {\cU}')
\end{equation}
Assume $V^{i}(\cE)=\widehat{X}$.  Since $\cE$ is a WIT-sheaf by (i) of Proposition \ref{prop:sh-it} we have from Lemma \ref{lem:wit-deg} that $\chi(\cE) \neq 0;$ consequently $\cE$ is IT of index $i$ by (ii) of Proposition \ref{prop:sh-it}.  In particular, $V^{i-1}(\cE) = \emptyset,$ so by our inductive hypothesis $V^{i-1}(\cE \otimes {\cU}') \neq \widehat{X}.$  The first inclusion in (\ref{eq:sh-incl}) then implies that $V^{i}(\cE)-V^{i-1}(\cE \otimes {\cU}') = \widehat{X}-V^{i-1}(\cE \otimes {\cU}')$ is a nonempty Zariski-open subset of $\widehat{X}$ contained in the Zariski-closed set $V^{i}(\cE \otimes \cU).$  Therefore $V^{i}(\cE \otimes \cU)=\widehat{X}.$   

Conversely, if $V^{i}(\cE \otimes \cU) = \widehat{X}$, the second inclusion in (\ref{eq:sh-incl}) and our inductive hypothesis imply that $V^{i}(\cE) = V^{i}(\cE \otimes {\cU}') = \widehat{X}.$
\end{proof}

Our next result shows how to relate the continous CM-regularity of a semihomogeneous bundle 
to essentially that of a direct summand. 

\begin{prop}
\label{prop:simp-uni}
Let $\cE$ be a semihomogeneous bundle on $X$ and let $\cE' \otimes \cU$ be an indecomposable direct summand of $\cE$, where ${\cE}'$ is simple semihomogeneous and $\cU$ is unipotent.  Then the following hold:
\begin{itemize}
\item[(i)]{$c_{1}(\cE)/{\rm rank}(\cE) = c_{1}({\cE}')/{\rm rank}({\cE}').$}
\item[(ii)]{${\rm reg}_{\rm cont}(\cE,\cO(1))={\rm reg}_{\rm cont}({\cE}',\cO(1)).$}
\end{itemize}

\end{prop}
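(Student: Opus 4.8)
The plan is to prove (i) first, since the equality of slopes it records is exactly what drives (ii). For (i), write $r' = {\rm rank}(\cE')$ and $u = {\rm rank}(\cU)$. As $\cU$ is unipotent it is a successive extension of copies of $\cO_{X}$, so $c_{1}(\cU) = 0$; hence $c_{1}(\cE' \otimes \cU) = u\,c_{1}(\cE')$ while ${\rm rank}(\cE' \otimes \cU) = ur'$, and the slope of the summand $\cE' \otimes \cU$ already equals $c_{1}(\cE')/r'$. It therefore suffices to show every indecomposable summand of $\cE$ has slope $c_{1}(\cE)/{\rm rank}(\cE)$. For this I would use that a semihomogeneous bundle is slope-semistable with respect to every polarization $H$ (\cite{Mu1}, \cite{Y}): writing $\cE = (\cE' \otimes \cU) \oplus \cF'$, both summands inject into $\cE$, so each has $\mu_{H} \leq \mu_{H}(\cE)$; since $\mu_{H}(\cE)$ is the rank-weighted average of the two summand slopes, both must equal $\mu_{H}(\cE)$. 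As this holds for all ample $H$ and the intersection form on $N^{1}_{\R}(X)$ is nondegenerate, we obtain $c_{1}(\cE' \otimes \cU)/{\rm rank}(\cE' \otimes \cU) = c_{1}(\cE)/{\rm rank}(\cE)$ in $N^{1}_{\R}(X)$, which with the first computation gives (i). Applying (i) to every indecomposable summand shows that all simple factors $\cE_{j}$ in the decomposition $\cE \cong \bigoplus_{j}\cE_{j} \otimes \cU_{j}$ of Proposition \ref{prop:semi-decomp} share the common slope $\mu := c_{1}(\cE')/r'$.

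For (ii) I would compare the defining conditions of continuous CM-regularity term by term. Fix $m \in \Z$ and $i > 0$. Since cohomology commutes with finite direct sums, $V^{i}(\cE(m-i)) = \bigcup_{j}V^{i}\big((\cE_{j}(m-i)) \otimes \cU_{j}\big)$. Each $\cE_{j}(m-i)$ is again simple semihomogeneous, as tensoring by a line bundle preserves both properties (using $t_{x}^{\ast}\cO(1) \cong \cO(1) \otimes \beta(x)$ for some $\beta(x) \in \widehat{X}$). Thus Proposition \ref{lem:sh-nondeg} applies to the pair $(\cE_{j}(m-i), \cU_{j})$ and yields $V^{i}((\cE_{j}(m-i)) \otimes \cU_{j}) = \widehat{X}$ if and only if $V^{i}(\cE_{j}(m-i)) = \widehat{X}$. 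Consequently $V^{i}(\cE(m-i)) \neq \widehat{X}$ precisely when $V^{i}(\cE_{j}(m-i)) \neq \widehat{X}$ for every $j$.

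The crux is that the truth value of $V^{i}(\cE_{j}(m-i)) = \widehat{X}$ is independent of $j$. For a simple semihomogeneous bundle $\cG$ I would first record, using that $\cG$ is WIT (Proposition \ref{prop:sh-it}(i)) together with Lemma \ref{lem:wit-deg} and Proposition \ref{prop:sh-it}(ii), that $V^{i}(\cG) = \widehat{X}$ holds exactly when $\chi(\cG) \neq 0$ and $i = \iota(\cG)$: if $\chi(\cG) = 0$ the lemma forbids any $V^{j}(\cG) = \widehat{X}$, whereas if $\chi(\cG) \neq 0$ then $\cG$ is IT and the nonzero Euler characteristic forces $H^{\iota(\cG)}(\cG \otimes \alpha) \neq 0$ for all $\alpha$. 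Applying this with $\cG = \cE_{j}(m-i)$ and invoking Proposition \ref{prop:sh-it}(i),(iii), both conditions reduce to statements about $\det(\cE_{j}(m-i))$, whose first Chern class is ${\rm rank}(\cE_{j})\big(\mu + (m-i)\eta\big)$ with $\eta = c_{1}(\cO(1))$. By part (i) the class $\mu + (m-i)\eta$ is the same for all $j$, and both the nonvanishing of $\chi$ of a line bundle (by Riemann--Roch, $\chi = (c_{1})^{g}/g!$) and its WIT-index depend only on the numerical class up to positive scaling; hence they are independent of $j$, and likewise agree with the corresponding conditions for $\cE'$, which also has slope $\mu$. Therefore ``$V^{i}(\cE_{j}(m-i)) \neq \widehat{X}$ for all $j$'' is equivalent to ``$V^{i}(\cE'(m-i)) \neq \widehat{X}$'', so the conditions defining ${\rm reg}_{\rm cont}(\cE,\cO(1))$ and ${\rm reg}_{\rm cont}(\cE',\cO(1))$ coincide for every $m$, giving (ii).

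The step I expect to require the most care is the passage in (i) from equality of the numerical slopes $\mu_{H}$ for every polarization $H$ to equality of the full first Chern classes in $N^{1}_{\R}(X)$, since $\mu_{H}$ records only one intersection number; this is where nondegeneracy of the intersection form (for $g \geq 2$ one differentiates $\delta \cdot (A + tB)^{g-1}$ and applies Hard Lefschetz, while $g = 1$ is immediate) enters, and it is essential because the index-function argument in (ii) needs equality of classes, not merely of $H$-slopes. A secondary point to check carefully is that the WIT-index of a line bundle is genuinely a function of its numerical class alone, so that the common class $\mu + (m-i)\eta$, rather than the individual ranks ${\rm rank}(\cE_{j})$, is what controls whether $V^{i}(\cE_{j}(m-i)) = \widehat{X}$.
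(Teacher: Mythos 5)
Your argument is correct, but it reaches both parts by a genuinely different route than the paper. For (i) the paper simply quotes Propositions 6.9 and 6.15 of \cite{Mu1}, which give the slope identities $c_{1}(\cE)/{\rm rank}(\cE) = c_{1}(\cE_{j}\otimes\cU_{j})/{\rm rank}(\cE_{j}\otimes\cU_{j}) = c_{1}(\cE_{j})/{\rm rank}(\cE_{j})$ directly; you instead derive them from slope-semistability of semihomogeneous bundles together with a rank-weighted-average argument, and then upgrade equality of $H$-slopes for all polarizations $H$ to equality of classes in $N^{1}_{\R}(X)$ via the Hodge index theorem. This is more self-contained but longer, and you correctly identify the passage from intersection numbers to classes as the delicate step. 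For (ii) the divergence is more substantial: after the common reduction via the decomposition of Proposition \ref{prop:semi-decomp} and Proposition \ref{lem:sh-nondeg}, the paper invokes Mukai's Proposition 6.17, which says that simple semihomogeneous bundles with the same slope differ by a twist by an element of $\widehat{X}$, so that the loci $V^{i}(\cE_{j}(m-i))$ are literal translates of $V^{i}(\cE'(m-i))$ and the condition $V^{i}=\widehat{X}$ is manifestly independent of $j$. You instead characterize when $V^{i}(\cG)=\widehat{X}$ for a simple semihomogeneous $\cG$ in terms of $\chi(\det\cG)$ and the index of $\det\cG$, and then use invariance of these under positive scaling of the numerical class; this essentially inlines the argument the paper defers to Proposition \ref{prop:exact-reg} and to Grieve's index function (Definition-Proposition \ref{defprop:index}). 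Both routes work; the paper's translation argument is shorter and needs no nondegeneracy or index considerations at this stage, while yours avoids Mukai's Proposition 6.17 at the cost of importing the index machinery earlier.
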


\begin{proof}
Consider the decomposition $\cE \cong \oplus_{j=1}^{s}\cE_{j} \otimes \cU_{j}$ guaranteed by Proposition \ref{prop:semi-decomp}.  Without loss of generality we may take $\cE' = {\cE}_{1}.$  By Propositions 6.9 and 6.15 of \cite{Mu1} we have for $1 \leq j \leq s$ that 
\begin{equation}
\label{eq:slopes}
c_{1}(\cE)/{\rm rank}(\cE) = c_{1}(\cE_{j} \otimes \cU_{j})/{\rm rank}(\cE_{j} \otimes \cU_{j}) = c_{1}(\cE_{j})/{\rm rank}(\cE_{j})
\end{equation}
so that (i) is proved.  Turning to (ii), we have for all $i > 0$ and all $m \in \Z$ that
\begin{equation}
\label{eq:supp-eq}
V^{i}(\cE(m-i)) = \displaystyle\bigcup_{j=1}^{s}V^{i}(\cE_{j}(m-i) \otimes \cU_{j})
\end{equation}
By (\ref{eq:slopes}) and Proposition 6.17 of \cite{Mu1} we have that for $1 \leq j \leq s$ there exists $\alpha_{j} \in \widehat{X}$ such that $\cE_{1} \cong \cE_{j} \otimes \alpha_{j}.$  It follows that the $V^{i}(\cE_{j}(m-i))$ are all translates of $V^{i}(\cE_{1}(m-i))$, and we may conclude from (\ref{eq:supp-eq}) and Proposition \ref{lem:sh-nondeg} that for all $i > 0$ we have $V^{i}(\cE(m-i)) \neq \widehat{X}$ if and only if $V^{i}(\cE_{1}(m-i)) \neq \widehat{X}.$  Consequently ${\rm reg}_{\rm cont}(\cE,\cO(1)) = {\rm reg}_{\rm cont}(\cE_{1},\cO(1))$ as desired.
\end{proof}

We now turn to the other main component of the proof of Theorem A, namely the index function studied in \cite{Gr}.

\begin{defprop}
\label{defprop:index}
If $U = \{{\gamma} \in N^{1}_{\R}(X) : \gamma^{\cdot g} \neq 0\}$ there is a unique function $\iota : U \to \{0, \cdots ,g\}$ which satisfies the following properties:
\begin{itemize}
\item[(i)]{$\iota$ is constant on each connected component of $U.$}
\item[(ii)]{For any simple semihomogeneous bundle $\cE$ of rank $r \geq 1$ on $X$ which is nondegenerate, we have that $\iota(c_{1}(\cE)/r)$ is the index of $\cE.$}
\end{itemize}
\end{defprop}

\begin{proof}
Recall that for any nondegenerate line bundle $L$ on $X$, the index of $L$ depends only on $c_{1}(L),$ so that we may speak of the index of any element of $N^{1}_{\Z}(X).$  By Corollary 4.2 of \cite{Gr}, there exists a function $\iota : U \to \{0, \cdots ,g\}$ which is constant on each connected component of $U$ (therefore satisfying (i)) and has the property for all $\eta \in N^{1}_{\Q}(X)$ and $a \in \N$ satisfying $a{\eta} \in N^{1}_{\Z}(X),$ we have ${\iota}(\eta)$ is the index of $a{\eta}.$

Let $\cE$ be a simple semihomogeneous bundle on $X$ which is nondegenerate.  Then by Proposition \ref{prop:sh-it} we have that $\cE$ is an IT-sheaf and ${\iota}(c_{1}(\cE)/r)$ is equal to the index of $\det{\cE},$ which in turn is the index of $\cE.$  
\end{proof}

\begin{prop}
Let $\eta \in N^{1}_{\R}(X)$ be an ample class.  Then the function ${\rho}_{\eta} : N^{1}_{\R}(X) \to \Z$ defined by
\begin{equation}
\label{eq:rho-def}
{\rho}_{\eta}(\gamma) = \min\{m \in \Z : {\forall}i \in \{1, \cdots ,g\}~ (\gamma+(m-i){\eta})^{\cdot g} = 0 \textnormal{ or }{\iota}(\gamma+(m-i){\eta}) \neq i\}
\end{equation}
is well-defined.  Moreover, it is locally constant on each connected component of the set
\begin{equation}
U_{\eta} := \{\gamma \in N^{1}_{\R}(X) : {\forall}t \in {\Z}~ (\gamma + t{\eta})^{\cdot g} \neq 0 \}
\end{equation}
which is open and dense in $N^{1}_{\R}(X)$ with respect to the classical topology.   
\end{prop}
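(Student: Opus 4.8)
The plan is to isolate a single ``uniform window'' lemma and then deduce all three assertions---well-definedness of $\rho_\eta$, openness and density of $U_\eta$, and local constancy of $\rho_\eta$---from it, together with two basic facts: that $\iota$ is constant on each connected component of $U$ (Definition-Proposition \ref{defprop:index}(i)), and that a nondegenerate class has index $0$ when ample and index $g$ when anti-ample (the latter by Serre duality, since $\omega_X = \cO_X$). The first step is to record the elementary observation that, because the ample cone $\mathrm{Amp}(X) \subseteq N^1_\R(X)$ is open with $\eta$ in its interior, for any bounded subset $W \subseteq N^1_\R(X)$ there are integers $a < b$ such that $\gamma + t\eta$ is ample for every $\gamma \in W$ and every $t \geq b$, and anti-ample for every $\gamma \in W$ and every $t \leq a$. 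This follows by writing $\gamma + t\eta = t(\eta + t^{-1}\gamma)$, noting that $\eta + t^{-1}\gamma \to \eta$ uniformly on $W$ as $|t| \to \infty$, and fixing a ball $B(\eta,\delta) \subseteq \mathrm{Amp}(X)$. Consequently, on $W$ the index $\iota(\gamma + t\eta)$ equals $0$ for all $t \geq b$ and equals $g$ for all $t \leq a$, so only the finitely many integers $t \in (a,b)$ can affect the computation of $\rho_\eta(\gamma)$.

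Well-definedness is then immediate at each point: taking $W = \{\gamma\}$, every $m \geq b + g$ satisfies $\iota(\gamma + (m-i)\eta) = 0 \neq i$ for all $i \in \{1,\dots,g\}$, so the set in (\ref{eq:rho-def}) is nonempty; while for $m \leq a$ the term $i = g$ gives $(\gamma + (m-g)\eta)^{\cdot g} \neq 0$ together with $\iota(\gamma + (m-g)\eta) = g$, so $m$ fails the defining condition and the set is bounded below. Hence the minimum exists for every $\gamma \in N^1_\R(X)$.

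For openness of $U_\eta$ at $\gamma_0 \in U_\eta$, I would apply the window lemma on a bounded open neighborhood $W$ of $\gamma_0$: for $\gamma \in W$ the number $(\gamma + t\eta)^{\cdot g}$ is automatically nonzero for $t \geq b$ (ample) and $t \leq a$ (anti-ample), while for each of the finitely many integers $t \in (a,b)$ the continuous function $\gamma \mapsto (\gamma + t\eta)^{\cdot g}$ is nonzero at $\gamma_0$, hence nonzero on a smaller neighborhood. Shrinking $W$ over this finite set yields a neighborhood of $\gamma_0$ contained in $U_\eta$. Density I would obtain from Baire's theorem: the complement $N^1_\R(X) \setminus U_\eta = \bigcup_{t \in \Z} \{\gamma : (\gamma + t\eta)^{\cdot g} = 0\}$ is a countable union of zero loci of nonzero polynomial functions, each closed with empty interior, so $U_\eta$ is comeager and in particular dense.

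Finally, for local constancy I would fix $\gamma_0 \in U_\eta$ and use the window lemma on a bounded neighborhood $W$ to pin $\iota(\gamma + t\eta)$ to its value $0$ for $t \geq b$ and $g$ for $t \leq a$, uniformly in $\gamma \in W$. For each of the finitely many integers $t \in (a,b)$, the point $\gamma_0 + t\eta$ lies in $U$, hence in some connected component $C_t$; since $U$ is open and $N^1_\R(X)$ is locally connected, $C_t$ is open, and $\iota$ is constant on $C_t$ by Definition-Proposition \ref{defprop:index}(i). Intersecting $W$ with the finitely many open sets $\{\gamma : \gamma + t\eta \in C_t\}$ produces a neighborhood of $\gamma_0$, necessarily contained in $U_\eta$, on which $t \mapsto \iota(\gamma + t\eta)$ agrees with $t \mapsto \iota(\gamma_0 + t\eta)$ for every $t \in \Z$; on this neighborhood the defining condition in (\ref{eq:rho-def}) is unchanged, so $\rho_\eta \equiv \rho_\eta(\gamma_0)$, which gives local constancy and hence constancy on each component. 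The main obstacle is the openness of $U_\eta$, which is a priori only a countable intersection of open sets; the window lemma is precisely what collapses it---and the local constancy argument---to a finite condition near any given point, and I expect most of the care to go into making the ``uniform in $\gamma$'' ampleness estimate precise.
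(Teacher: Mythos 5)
Your proposal is correct, and it proves everything the statement asks for. The underlying ingredients are the same as in the paper's proof---the local constancy of the index function from Definition-Proposition \ref{defprop:index}(i), together with the fact that $\gamma+t\eta$ is ample (index $0$) for $t\gg 0$ and anti-ample (index $g$) for $t\ll 0$---but your organization around the uniform ``window'' lemma is genuinely different in two useful ways. First, it makes the openness of $U_{\eta}$ explicit: the complement is a countable union of translated real hypersurfaces, which is not closed for free, and your observation that only finitely many translates meet a bounded neighborhood (local finiteness) is exactly the point the paper leaves implicit; Baire then gives density. Second, your local constancy argument shows that the \emph{entire} profile $t\mapsto\iota(\gamma+t\eta)$, and hence the whole defining set in (\ref{eq:rho-def}), is unchanged on a neighborhood of $\gamma_{0}$. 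This is strictly stronger than the paper's two-sided estimate, which only establishes directly that $\rho_{\eta}(\gamma_{0})$ lies in the defining set for nearby $\gamma'$ and that $\rho_{\eta}(\gamma_{0})-1$ does not---a pair of facts that forces equality only once one knows the defining set is upward closed (which follows from monotonicity of the index along the ample direction, but is not spelled out there). Your version sidesteps that subtlety entirely, at the modest cost of having to make the ``uniform in $\gamma$'' ampleness estimate precise, which you do correctly via $\gamma+t\eta=t(\eta+t^{-1}\gamma)$ and a ball around $\eta$ in the ample cone.
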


\begin{proof}
To show that $\rho_{\eta}$ is well-defined it is enough to check that for all $\gamma \in N^{1}_{\R}(X)$ the set of integers on the right-hand side of (\ref{eq:rho-def}) is nonempty and bounded from below.  Let $\gamma \in N^{1}_{\R}(X)$ be given.  Since $\eta$ is ample, we have that for $m >> 0$ the class $\gamma+(m-i)\eta$ is ample for each $i \in \{1, \cdots ,g\}$ and therefore nondegenerate of index 0; this proves nonemptiness.  Also, for $m' >> 0$ we have that $-\gamma + (m'+g){\eta}$ is ample, so that $\gamma+(-m'-g){\eta}$ is nondegenerate of index $g;$ this proves boundedness from below.  Thus $\rho_{\eta}$ is well-defined as claimed.

Since $N^{1}_{\R}(X)-U_{\eta}$ is the union of all translates of the real-algebraic hypersurface $\{\gamma \in N^{1}_{\R}(X) : \gamma^{\cdot g} = 0\}$ by integer multiples of $\eta$, its complement $U_{\eta}$ is open and dense in $N^{1}_{\R}(X)$ with respect to the classical topology.  

Let $\gamma \in U_{\eta}$ be given, and fix a norm $|{\cdot}|$ on $N^{1}_{\R}(X)$.  By (i) of Definition-Proposition \ref{defprop:index} there exists $\eps > 0$ such that for all ${\gamma}' \in N^{1}_{\R}(X)$ satisfying $|\gamma - {\gamma}'| < \eps$ we have that ${\gamma}' \in U_{\eta}$ and ${\iota}({\gamma}'+(\rho_{\eta}(\gamma)-i){\eta}) \neq i$ for $1 \leq i \leq g.$  Therefore $\rho_{\eta}({\gamma}') \leq \rho_{\eta}(\gamma)$ whenever $|\gamma - {\gamma}'| < \eps.$  Also, there exists $i' \in \{1, \cdots ,g\}$ such that ${\iota}(\gamma+(\rho_{\eta}(\gamma)-1-i')\eta) = i'.$  Applying (i) of Definition-Proposition \ref{defprop:index} once more, there exists ${\eps}' > 0$ such that ${\gamma}' \in U_{\eta}$ and ${\iota}({\gamma}'+(\rho_{\eta}(\gamma)-1-i')\eta) = i'$ whenever $|\gamma - {\gamma}'| < {\eps}'.$  We may conclude that $\rho_{\eta}(\gamma) = \rho_{\eta}({\gamma}')$ whenever $|\gamma - {\gamma}'| < \min\{\eps,{\eps}'\},$ e.g.~ that $\rho_{\eta}$ is constant on an open neighborhood of $\gamma$ in $U_{\eta}.$  It follows that $\rho_{\eta}$ is constant on the connected component of $U_{\eta}$ which contains $\gamma.$
\end{proof}

The following result concludes the proof of Theorem A.

\begin{prop}
\label{prop:exact-reg}
If $\cE$ is a semihomogeneous bundle of rank $r \geq 1$ on $X$ and $\eta = c_{1}(\cO(1))$ then ${\rm reg}_{\rm cont}(\cE,\cO(1)) = {\rho}_{\eta}(c_{1}(\cE)/r).$
\end{prop}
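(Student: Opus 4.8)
The plan is to combine the two reduction results (Propositions \ref{prop:simp-uni} and \ref{lem:sh-nondeg}) with the definition of $\rho_\eta$ and the index function from Definition-Proposition \ref{defprop:index}, treating the nondegenerate and degenerate cases separately. First I would use part (ii) of Proposition \ref{prop:simp-uni} to reduce to the case where $\cE$ is simple semihomogeneous: since every indecomposable summand of $\cE$ has the form $\cE'\otimes\cU$ with $\cE'$ simple semihomogeneous, and part (i) gives $c_1(\cE)/\mathrm{rank}(\cE)=c_1(\cE')/\mathrm{rank}(\cE')$, both sides of the claimed equality are unchanged when $\cE$ is replaced by $\cE'$. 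So it suffices to prove the statement for a simple semihomogeneous bundle $\cE$ of rank $r$, and I would write $\gamma=c_1(\cE)/r$ for the rest of the argument.

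The heart of the matter is to unwind the definition of continuous CM-regularity and match it with (\ref{eq:rho-def}). By Definition \ref{def:cont-reg}, $\crego{\cE}$ is the least $m$ such that $V^i(\cE(m-i))\neq\widehat X$ for all $i>0$. The key observation is that $\cE(m-i)=\cE\otimes\cO(m-i)$ is again a simple semihomogeneous bundle (tensoring by a line bundle preserves simplicity and semihomogeneity), with $c_1(\cE(m-i))/r = \gamma+(m-i)\eta$. I would then distinguish two cases for each relevant index $i$. If $\cE(m-i)$ is \emph{nondegenerate}, i.e.\ $(\gamma+(m-i)\eta)^{\cdot g}\neq 0$, then by Proposition \ref{prop:sh-it}(ii) it is an IT-sheaf, so $V^j(\cE(m-i))\neq\widehat X$ for every $j$ except possibly its index; and by Definition-Proposition \ref{defprop:index}(ii) that index equals $\iota(\gamma+(m-i)\eta)$. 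Hence for nondegenerate $\cE(m-i)$ the condition $V^i(\cE(m-i))\neq\widehat X$ is equivalent to $\iota(\gamma+(m-i)\eta)\neq i$, exactly matching one of the two disjuncts in (\ref{eq:rho-def}). The remaining task is the \emph{degenerate} case, $(\gamma+(m-i)\eta)^{\cdot g}=0$, which corresponds to the other disjunct.

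The main obstacle is precisely this degenerate case: I must show that when $\cE(m-i)$ is degenerate (so $\chi(\cE(m-i))=0$), the vanishing $V^i(\cE(m-i))\neq\widehat X$ holds automatically for \emph{every} $i>0$, so that the defining condition in (\ref{eq:rho-def}) is satisfied by the ``$(\gamma+(m-i)\eta)^{\cdot g}=0$'' disjunct regardless of the index. This is exactly what Lemma \ref{lem:wit-deg} delivers: a simple semihomogeneous bundle is a WIT-sheaf by Proposition \ref{prop:sh-it}(i), and its Euler characteristic vanishes precisely when it is degenerate, via Proposition \ref{prop:sh-it}(iii) together with the fact that $\chi(\det)=0$ iff the top self-intersection of $c_1/r$ vanishes. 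Applying Lemma \ref{lem:wit-deg} to the WIT-sheaf $\cE(m-i)$ gives $V^j(\cE(m-i))\neq\widehat X$ for all $j\geq 0$, and in particular for $j=i$. Putting the two cases together, for each $i\in\{1,\dots,g\}$ the condition $V^i(\cE(m-i))\neq\widehat X$ holds if and only if $(\gamma+(m-i)\eta)^{\cdot g}=0$ or $\iota(\gamma+(m-i)\eta)\neq i$; taking the minimum over $m$ of the $m$ satisfying this for all $i$ yields $\crego{\cE}=\rho_\eta(\gamma)$, as claimed. (One should check that the indices $i>g$ contribute nothing, since $V^i=\emptyset$ for $i>g$ on a $g$-dimensional variety, so restricting to $i\in\{1,\dots,g\}$ in (\ref{eq:rho-def}) is harmless.)
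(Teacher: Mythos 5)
Your proposal is correct and follows essentially the same route as the paper: reduce to the simple case via Proposition \ref{prop:simp-uni}, handle nondegenerate twists $\cE(m-i)$ via Proposition \ref{prop:sh-it} and Definition-Proposition \ref{defprop:index}, and dispose of the degenerate twists with Lemma \ref{lem:wit-deg}. The only cosmetic difference is that you establish the pointwise equivalence of the two defining conditions and equate the minima directly, whereas the paper proves the inequality $\rho_{\eta}(c_{1}(\cE)/r) \leq \crego{\cE}$ and then rules out strict inequality by contradiction.
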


\begin{proof}
By Proposition \ref{prop:simp-uni} we may assume without loss of generality that $\cE$ is simple; in particular, $\cE(t)$ is simple for all $t \in \Z.$  For each $m \geq \crego{\cE}$ and $i \in \{1, \cdots ,g\}$ we have from Proposition \ref{prop:sh-it} that if $\cE(m-i)$ is nondegenerate, then
\begin{equation}
{\iota}\biggl(\frac{c_{1}(\cE)}{r}+(m-i)\eta\biggr) = {\iota}\biggl(\frac{c_{1}(\cE(m-i))}{r}\biggr) = {\iota}(c_{1}(\cE(m-i))) = {\iota}(\cE(m-i)) \neq i
\end{equation}
Consequently $\rho_{\eta}(c_{1}(\cE)/r) \leq \crego{\cE}.$  If $\rho_{\eta}(c_{1}(\cE)/r) < \crego{\cE}$ then for some $i' \in \{1, \cdots ,g\}$ we have $V^{i'}(\cE(\rho_{\eta}(c_{1}(\cE)/r)-i')) = \widehat{X}.$  By Lemma \ref{lem:wit-deg} and Proposition \ref{prop:sh-it}, it follows that $\cE(\rho_{\eta}(c_{1}(\cE)/r)-i')$ is nondegenerate and IT of index $i',$ which is impossible.
\end{proof}

\subsection{Proofs of Theorem B and Proposition C}
\label{subsec:prop-b}

Our first task is to establish that most reasonable positivity conditions coincide for semihomogeneous vector bundles.  While the results are almost certainly known to experts, we include their proofs for the sake of completeness.  

 
\begin{prop}
\label{prop:semihom-pos}
Let $\cE$ be a semihomogeneous vector bundle on $X.$  Then the following are equivalent:
\begin{itemize}
	\item[(i)]{$V^{0}(\cE) = \widehat{X}$.}
	\item[(ii)]{$\cE$ is I.T. of index 0.}
	\item[(iii)]{$\cE$ is $M-$regular.}
	\item[(iv)]{$\cE$ is continuously globally generated.}
	\item[(v)]{$\cE$ is ample.}
	\item[(vi)]{${\det}(\cE)$ is ample.}
\end{itemize}
\end{prop}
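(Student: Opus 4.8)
The plan is to prove the six conditions equivalent through a single cycle of implications, sorting them into those valid for an arbitrary coherent sheaf (or available from cited results) and the two that actually invoke semihomogeneity. The formal implications, which I would dispatch first, are: (ii)$\Rightarrow$(iii), since an IT-sheaf of index $0$ is $M$-regular; (iii)$\Rightarrow$(iv) by \cite[Proposition 2.13]{PP1}; (iii)$\Rightarrow$(v), since $M$-regular sheaves are ample by \cite{De}; (v)$\Rightarrow$(vi), because $\det\cE$ is an exterior power of $\cE$, hence a quotient of a tensor power of $\cE$, and tensor powers and quotients of an ample bundle are ample; and (ii)$\Rightarrow$(i), because a nonzero IT-sheaf of index $0$ satisfies $h^{0}(\cE\otimes\alpha)=\chi(\cE)>0$ for every $\alpha$. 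For (iv)$\Rightarrow$(i) I would argue that if $V^{0}(\cE)\neq\widehat{X}$ then, as $V^{0}(\cE)$ is closed, the open set $U$ in the definition of continuous global generation can be taken inside its complement, forcing every summand $H^{0}(\cE\otimes\alpha)\otimes\alpha^{\vee}$ to vanish and contradicting surjectivity onto $\cE\neq 0$.

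With these in hand, (ii)$\Rightarrow$(iii)$\Rightarrow$(iv)$\Rightarrow$(i) and (iii)$\Rightarrow$(v)$\Rightarrow$(vi), so everything follows once I prove (i)$\Rightarrow$(ii) and (vi)$\Rightarrow$(ii). Both use semihomogeneity, and I would reduce each to the case where $\cE$ is simple. Writing $\cE\cong\bigoplus_{j}\cE_{j}\otimes\cU_{j}$ as in Proposition \ref{prop:semi-decomp}, the slope identity (\ref{eq:slopes}) together with Proposition 6.17 of \cite{Mu1} shows that the $\cE_{j}$ are $\widehat{X}$-translates of a single simple bundle $\cE_{1}$. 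Since $\widehat{X}$ is irreducible and $V^{0}(\cE)=\bigcup_{j}V^{0}(\cE_{j}\otimes\cU_{j})$ is a finite union of closed sets, Proposition \ref{lem:sh-nondeg} gives $V^{0}(\cE)=\widehat{X}$ if and only if $V^{0}(\cE_{1})=\widehat{X}$, so (i) for $\cE$ is equivalent to (i) for $\cE_{1}$; the same translation-and-unipotent bookkeeping, together with the stability of index-$0$ IT-vanishing under extension, shows that $\cE_{1}$ IT of index $0$ forces $\cE$ IT of index $0$; and since $c_{1}(\det\cE)$ is a positive multiple of $c_{1}(\det\cE_{1})$, the numerical nature of ampleness for line bundles gives that (vi) for $\cE$ is equivalent to (vi) for $\cE_{1}$.

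For simple $\cE$ the two implications come directly from Proposition \ref{prop:sh-it}. If $\det\cE$ is ample then $\chi(\det\cE)>0$, so $\chi(\cE)\neq 0$ by part (iii) of that proposition; hence $\cE$ is an IT-sheaf by part (ii), with index $\iota(\cE)=\iota(\det\cE)$ by part (i), and this index is $0$ because the ample line bundle $\det\cE$ is IT of index $0$, giving (vi)$\Rightarrow$(ii). If instead $V^{0}(\cE)=\widehat{X}$, then $\chi(\cE)\neq 0$: for $\cE$ is a WIT-sheaf by part (i) of Proposition \ref{prop:sh-it}, so $\chi(\cE)=0$ would force $V^{0}(\cE)\neq\widehat{X}$ by Lemma \ref{lem:wit-deg}. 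Thus $\cE$ is IT by part (ii), and since $V^{0}(\cE)\neq\emptyset$ its index must be $0$, giving (i)$\Rightarrow$(ii).

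The main obstacle is less any individual implication than the reduction to the simple case: one must verify that (i), (ii) and (vi) are each insensitive both to tensoring a simple factor by a unipotent bundle and to the $\Pic^{0}$-translation relating the $\cE_{j}$, and the degenerate locus is where this is delicate. The whole argument ultimately turns on excluding a simple semihomogeneous bundle with $\chi(\cE)=0$ that nonetheless looks positive, which is precisely what Lemma \ref{lem:wit-deg} prevents; without it one could not rule out a WIT-sheaf whose cohomological support loci are proper but positive-dimensional. A secondary subtlety is the reading of continuous global generation used in (iv)$\Rightarrow$(i): one needs surjectivity for all sufficiently small $U$, since a bundle such as $\cO_{X}$ is globally generated---hence continuously globally generated in the weaker ``there exists $U$'' sense---yet has $V^{0}(\cO_{X})\neq\widehat{X}$.
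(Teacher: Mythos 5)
Your proof is correct and follows essentially the same route as the paper: the same reduction to the simple case via Propositions \ref{prop:semi-decomp} and \ref{lem:sh-nondeg}, and the same use of Proposition \ref{prop:sh-it} together with Lemma \ref{lem:wit-deg} for the two implications, (i)$\Rightarrow$(ii) and (vi)$\Rightarrow$(ii), that genuinely use semihomogeneity. The only divergence is in how the cycle is closed: you use (iv)$\Rightarrow$(i) directly (which, as you rightly flag, requires the standard ``for every sufficiently small $U$'' reading of continuous global generation rather than the existential one as literally written in the paper), whereas the paper routes through (iv)$\Rightarrow$(v) via Proposition 3.1 of \cite{De}.
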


\begin{proof}
	For (i)$\Rightarrow$(ii), assume that $V^{0}(\cE)=\widehat{X}.$  Since $\cE$ is semihomogeneous and the property of being I.T. of index 0 is preserved under extension, we can assume by Propositions \ref{prop:semi-decomp} and \ref{lem:sh-nondeg} that $\cE$ is simple.  Then $\cE$ is a WIT-sheaf by (i) of Proposition \ref{prop:sh-it}, and combining (ii) of the latter with Lemma \ref{lem:wit-deg} and our assumption on $V^{0}(\cE)$ shows that $\cE$ is I.T. of index 0.  The statement (ii)$\Rightarrow$(i) is immediate, so we have just shown the equivalence (i)$\Leftrightarrow$(ii).    
	
	The implication (ii) $\Rightarrow$ (iii) is immediate from the definitions, and (iii) $\Rightarrow$ (iv) follows from \cite[Proposition 2.13]{PP1}, whereas (iv) $\Rightarrow$(v) follows from Proposition 3.1 of \cite{De}.  Also, (v)$\Rightarrow$(vi) holds since determinants of ample bundles are ample.
	
	It remains to verify (vi)$\Rightarrow$(i), or equivalently (vi)$\Rightarrow$(ii).  Assume $\det(\cE)$ is ample.  By Proposition \ref{prop:semi-decomp} and (i) of Proposition \ref{prop:simp-uni}, $\det(\cE)$ is a tensor product of determinants of simple semihomogeneous bundles whose first Chern classes are positive rational multiples of one another; in particular these determinants are all ample.  Since being I.T. of index 0 is preserved under extension, we can assume by Proposition \ref{prop:semi-decomp} that $\cE$ is simple.  Then $\cE$ is a WIT-sheaf of index 0 by (i) of Proposition \ref{prop:sh-it} and the ampleness of $\det(\cE).$  Since ample line bundles on abelian varieties are nondegenerate, we have from (ii) and (iii) of Proposition \ref{prop:sh-it} that $\cE$ is I.T. of index 0 as desired.  

\end{proof}

\begin{prop}
\label{prop:semihom-nef}
Let $\cE$ be a semihomogeneous vector bundle on $X.$  Then the following are equivalent:
\begin{itemize}
\item[(i)]{$\cE$ is a GV-sheaf.}
\item[(ii)]{$\cE$ is nef.}
\item[(iii)]{$\det(\cE)$ is nef.}
\end{itemize}
\end{prop}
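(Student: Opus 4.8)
The plan is to prove the cycle of implications (i) $\Rightarrow$ (ii) $\Rightarrow$ (iii) $\Rightarrow$ (i), where the first two are formal and general while all the content sits in (iii) $\Rightarrow$ (i). For (i) $\Rightarrow$ (ii) I would simply invoke Theorem 4.1 of \cite{PP3} (recorded in the preliminaries), which asserts that every GV-sheaf on an abelian variety is nef; this uses nothing about semihomogeneity. For (ii) $\Rightarrow$ (iii) I would use the standard fact that the determinant of a nef vector bundle is nef, since $\det(\cE) = \bigwedge^{\rank(\cE)}\cE$ and exterior powers of nef bundles are nef.

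The heart of the matter is (iii) $\Rightarrow$ (i): assuming $\det(\cE)$ is nef, I must show $\cE$ is a GV-sheaf. First I would record two closure properties of the GV condition, both immediate from the inclusion $V^i(\cF) \subseteq V^i(\cF') \cup V^i(\cF'')$ attached to an extension $0 \to \cF' \to \cF \to \cF'' \to 0$ together with $V^i(\cF' \oplus \cF'') = V^i(\cF') \cup V^i(\cF'')$: the GV property is preserved under extensions and under direct sums. Using the decomposition $\cE \cong \bigoplus_j \cE_j \otimes \cU_j$ of Proposition \ref{prop:semi-decomp}, and the fact that each $\cU_j$ is an iterated self-extension of $\cO_X$ (so that $\cE_j \otimes \cU_j$ is an iterated self-extension of $\cE_j$), this reduces the problem to showing each \emph{simple} $\cE_j$ is GV. Moreover, the equality of slopes (\ref{eq:slopes}) gives $c_1(\cE_j)/\rank(\cE_j) = c_1(\cE)/\rank(\cE) =: \mu$ for every $j$, so nefness of $\det(\cE)$, i.e.\ of $\mu$ up to a positive multiple, forces nefness of each $\det(\cE_j)$.

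To treat a simple $\cE_j$ I would invoke the structure theorem (Proposition \ref{prop:semihom-dir}) to write $\cE_j \cong p_\ast \cL$ for an isogeny $p \colon Z \to X$ and a line bundle $\cL$ on $Z$. The key reduction is that \emph{pushforward along an isogeny both preserves and reflects the GV property}: by the projection formula and finiteness of $p$ one has $H^i(X, p_\ast\cL \otimes \alpha) = H^i(Z, \cL \otimes p^\ast\alpha)$, so $V^i(p_\ast\cL) = (p^\ast)^{-1}\bigl(V^i(\cL)\bigr)$, and since $p^\ast \colon \widehat{X} \to \widehat{Z}$ is itself a surjective, finite isogeny it preserves codimension; hence $\codim V^i(\cE_j) = \codim V^i(\cL)$. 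A parallel computation with first Chern classes gives $c_1(\cL) = p^\ast\mu$, so that $\det(\cE_j)$ being nef is equivalent to $\cL$ being nef. In this way everything is reduced to the assertion that \emph{a line bundle $\cL$ on an abelian variety is a GV-sheaf if and only if it is nef.}

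This last assertion is the step I expect to be the main obstacle, and I would prove it by a direct degeneracy analysis. Let $K^0$ be the connected component of the kernel of $\phi_{\cL} \colon Z \to \widehat{Z}$, let $\pi \colon Z \to B = Z/K^0$ be the quotient, let $d = \dim K^0$, and let $k$ be the index of the nondegenerate class to which $c_1(\cL)$ descends on $B$. For $\alpha \in \widehat{Z}$ the restriction $(\cL\otimes\alpha)\vert_{K^0}$ is topologically trivial, and it is holomorphically trivial precisely on one coset of $\widehat{B} = \ker(\widehat{Z} \to \widehat{K^0})$, a locus of codimension $d$; off this coset all cohomology vanishes, while on it a Leray/K\"unneth computation for $\pi$ yields $H^i(\cL \otimes \alpha) \neq 0$ exactly for $i \in [k, k+d]$. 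Hence $V^i(\cL)$ has codimension $d$ for $i \in [k,k+d]$ and is empty otherwise, so the GV inequalities $\codim V^i(\cL) \geq i$ hold for all $i > 0$ if and only if $k = 0$. Finally $k = 0$ means the descended class on $B$ is ample; since $c_1(\cL) = \pi^\ast(\text{that class})$ and pullback along the surjection $\pi$ preserves and reflects nefness, this is equivalent to $\cL$ being nef. Combining this biconditional with the two reductions above completes (iii) $\Rightarrow$ (i), and hence the proposition.
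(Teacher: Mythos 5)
Your proof is correct and follows the same overall architecture as the paper's: (i)$\Rightarrow$(ii) via Theorem 4.1 of \cite{PP3}; reduction of (iii)$\Rightarrow$(i) to the simple case via the decomposition of Proposition \ref{prop:semi-decomp} (your observation that GV is preserved under extensions and direct sums is exactly the right way to make that reduction precise); and then reduction to a line bundle via $\cE_j \cong p_\ast\cL$, the identity $V^{i}(p_\ast\cL) = (p^\ast)^{-1}(V^{i}(\cL))$, and the numerical comparison $c_{1}(\cL) \equiv p^\ast c_{1}(\cE_j)/\rank(\cE_j)$. You diverge in two inessential places: for (ii)$\Rightarrow$(iii) the paper cites Proposition 6.2 of \cite{Gul} for the full equivalence (ii)$\Leftrightarrow$(iii), while you use the standard fact that determinants of nef bundles are nef, which suffices since you close the cycle of implications; and for the final step the paper simply cites Corollary C of \cite{PP2} for the statement that a nef line bundle on an abelian variety is a GV-sheaf, whereas you reprove it from scratch via the quotient by $K^{0} = (\ker\phi_{\cL})^{0}$ and the computation of the loci $V^{i}(\cL)$. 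That direct computation is correct (and yields the biconditional ``nef $\Leftrightarrow$ GV'' for line bundles, which is more than needed), but it amounts to reproving the cited result; the citation is all that is required.
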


\begin{proof}
The implication (i) $\Rightarrow$ (ii) follows from Theorem 4.1 of \cite{PP3}, and the equivalence (ii) $\Leftrightarrow$ (iii) follows from Proposition 6.2 of \cite{Gul}.  Arguing as in the proof of Proposition \ref{prop:semihom-pos}, we can assume without loss of generality that $\cE$ is simple.  For an isogeny $p : Z \to X$ and a line bundle $\cL$ on $Z$ there is an isomorphism $\cE \cong p_{\ast}\cL.$  It follows from the projection formula that the dual isogeny $p^{\ast} : \widehat{X} \to \widehat{Z}$ maps $V^{i}(\cE)$ onto $V^{i}(\cL)$ for all $i$; therefore $\cE$ is a GV-sheaf if and only if $\cL$ is a GV-sheaf.  We have that
\begin{equation}
p^{\ast}\cE \cong p^{\ast}p_{\ast}\cL \cong \bigoplus_{\eta \in {\rm ker}(\pi)}\cL \otimes \eta
\end{equation}
In particular, $p^{\ast}\det(\cE) \cong \det(p^{\ast}\cE)$ is numerically equivalent to a positive tensor power of $\cL$, and since $\det(\cE)$ is nef, the same is true of $\cL.$  By Corollary C of \cite{PP2} any nef line bundle on an abelian variety is a GV-sheaf; consequently $\cL$ is a GV-sheaf, and therefore so is $\cE.$
\end{proof}

We now take up the proof of Theorem B in earnest.  Before investigating the continuous CM regularity of general vector bundles on $X,$ we will use Theorem A to compute the continuous CM-regularity of semihomogeneous bundles whose first Chern class is proportional to that of $\cO(1).$  Note that the following result completely describes the continuous CM regularity of semihomogeneous bundles when $X$ has Picard number 1.

\begin{prop}
\label{prop:pic-rk-1}
If $\cE$ is a semihomogeneous bundle on $X$ for which $c_{1}(\cE) \in {\Q}c_{1}(\cO(1)),$ then
\begin{equation}
\label{eq:semi-sharp}
{\rm reg}_{\rm cont}(\cE,\cO(1)) = \min\{m \in Z : \cE(m-g) \textnormal{ is nef}\} = \min\{m \in Z : \cE(m-g) \textnormal{ is a GV-sheaf}\}
\end{equation}
\end{prop}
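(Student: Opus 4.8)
The plan is to reduce both equalities to explicit numerical computations: the second is essentially immediate, while the first rests on Theorem A. Throughout write $r = {\rm rank}(\cE)$ and $\eta = c_{1}(\cO(1))$, and note that since $c_{1}(\cE) \in \Q\eta$ we may write $c_{1}(\cE)/r = c\eta$ for a unique $c \in \Q$.

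First I would dispose of the second equality. For every $m$ the twist $\cE(m-g) = \cE \otimes \cO(m-g)$ is again semihomogeneous: this is immediate from the definition, since for each $x \in X$ one has $t_{x}^{\ast}(\cE \otimes \cO(m-g)) \cong (t_{x}^{\ast}\cE) \otimes (t_{x}^{\ast}\cO(m-g)) \cong (\cE \otimes \cO(m-g)) \otimes (\alpha \otimes \beta)$ for suitable $\alpha,\beta \in \widehat{X}$, where $\alpha$ comes from the semihomogeneity of $\cE$ and $\beta$ from that of the line bundle $\cO(m-g)$. Proposition \ref{prop:semihom-nef} then shows that $\cE(m-g)$ is nef if and only if it is a GV-sheaf, so the sets $\{m : \cE(m-g) \textnormal{ nef}\}$ and $\{m : \cE(m-g) \textnormal{ a GV-sheaf}\}$ coincide, and hence so do their minima.

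For the first equality I would invoke Theorem A in the form of Proposition \ref{prop:exact-reg}, which gives ${\rm reg}_{\rm cont}(\cE,\cO(1)) = \rho_{\eta}(c\eta)$, and then compute the right-hand side directly from (\ref{eq:rho-def}). For each $i \in \{1,\ldots,g\}$ the relevant class is $c\eta + (m-i)\eta = (c+m-i)\eta$. Since $\eta$ is ample we have $\eta^{\cdot g} > 0$, so $((c+m-i)\eta)^{\cdot g} = (c+m-i)^{g}\,\eta^{\cdot g}$ vanishes exactly when $c+m-i = 0$; when it is nonzero, the class is a positive (resp.\ negative) real multiple of the ample class $\eta$, hence ample of index $0$ when $c+m-i > 0$ and anti-ample of index $g$ (by Serre duality, using $\omega_{X} \cong \cO_{X}$) when $c+m-i < 0$. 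Reading this against the disjunction in (\ref{eq:rho-def}), the defining condition can fail only when $i = g$ and $c+m-g < 0$; for every $i < g$ one of the two disjuncts always holds. Consequently $\rho_{\eta}(c\eta) = \min\{m \in \Z : c + m - g \geq 0\}$.

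It remains to identify the nef side with the same quantity. Using again that $\cE(m-g)$ is semihomogeneous, Proposition \ref{prop:semihom-nef} reduces nefness of $\cE(m-g)$ to nefness of $\det(\cE(m-g))$, whose first Chern class is $c_{1}(\cE) + r(m-g)\eta = r(c+m-g)\eta$. As a scalar multiple of the ample class $\eta$ this is nef precisely when $c+m-g \geq 0$, so $\min\{m : \cE(m-g) \textnormal{ nef}\} = \min\{m \in \Z : c+m-g \geq 0\}$, matching the value of $\rho_{\eta}(c\eta)$ above. The only real subtlety is the bookkeeping in the computation of $\rho_{\eta}$, namely correctly reading off the index ($0$ versus $g$) of the proportional classes $(c+m-i)\eta$ and verifying that only the single index $i = g$ governs the boundary; once all the relevant classes are proportional to $\eta$, this is routine.
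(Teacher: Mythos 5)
Your proof is correct and follows essentially the same route as the paper: the second equality is reduced to Proposition \ref{prop:semihom-nef}, and the first comes from Theorem A together with the sign trichotomy of $c+m-g$ for classes proportional to $\eta$. The only cosmetic difference is that you evaluate $\rho_{\eta}(c\eta)$ explicitly from (\ref{eq:rho-def}), while the paper phrases the same computation in terms of the IT-index of the twists $\cE(m-g)$; the substance is identical.
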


\begin{proof}
The second equality in (\ref{eq:semi-sharp}) is an immediate consequence of Proposition \ref{prop:semihom-nef}; it therefore suffices to verify the first equality.  Since $c_{1}(\cE) \in {\Q}c_{1}(\cO(1))$ we have for all $m \in \Z$ that $\cE(m-g)$ is either of I.T. of index 0, has numerically trivial determinant, or is I.T. of index $g.$  Sheaves of the latter type are not GV-sheaves, so applying Proposition \ref{prop:semihom-nef} once more, we see that $\cE(m-g)$ is nef if and only if one of the first two possibilities holds.  The first equality in (\ref{eq:semi-sharp}) then follows from Theorem A.
\end{proof}

Together with Proposition \ref{prop:pic-rk-1}, the next result concludes the proof of Theorem B.

\begin{prop}
\label{prop:gen-ineq}
Let $X$ be an abelian variety of dimension $g \geq 1$ and let $\cE$ be a vector bundle on $X.$  Then we have that
\begin{equation}
\label{eq:thmb-ineq}
{\rm reg}_{\rm cont}(\cE,\cO(1)) \leq m_{\cE} := \min\{m \in \Z : \cE(m-g)\textnormal{ is a GV-sheaf}\}
\end{equation}
with equality if and only if $V^{0}(\cE^{\vee}(g+1-m_{\cE})) = \widehat{X}.$
\end{prop}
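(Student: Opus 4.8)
The plan is to set $\cF := \cE(m_{\cE}-g)$, which is a GV-sheaf by the very definition of $m_{\cE}$, and to rewrite every twist occurring in the definition of continuous regularity as $\cF$ tensored by a power of $\cO(1)$. For the inequality I would verify that $V^{i}(\cE(m_{\cE}-i)) \neq \widehat{X}$ for every $i>0$. Since $\cE(m_{\cE}-i) = \cF \otimes \cO(g-i)$, the cases split: for $i>g$ the locus is empty for dimension reasons; for $i=g$ it equals $V^{g}(\cF)$, which cannot be all of $\widehat{X}$ because the GV property forces $\codim V^{g}(\cF) \geq g$ while $\dim \widehat{X} = g \geq 1$; and for $1 \leq i \leq g-1$ the line bundle $\cO(g-i)$ is a positive power of an ample, globally generated line bundle, hence I.T. of index $0$, so Proposition \ref{prop:mreg-it} makes $\cF \otimes \cO(g-i)$ I.T. of index $0$ and the locus is empty. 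This yields $\crego{\cE} \leq m_{\cE}$.

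For the equality clause I would observe that $\crego{\cE} = m_{\cE}$ is equivalent to the failure of $\crego{\cE} \leq m_{\cE}-1$, i.e. to the existence of some $i>0$ with $V^{i}(\cE(m_{\cE}-1-i)) = \widehat{X}$. Setting $\cG := \cE(m_{\cE}-1-g) = \cF(-1)$, the relevant twist is $\cE(m_{\cE}-1-i) = \cF \otimes \cO(g-1-i)$. The heart of the argument is to rule out every index except $i=g$: for $i>g$ the locus is empty; for $1 \leq i \leq g-2$ the exponent $g-1-i$ is still strictly positive, so $\cO(g-1-i)$ is I.T. of index $0$ and Proposition \ref{prop:mreg-it} again forces $V^{i}=\emptyset$; and for $i=g-1$ (only relevant when $g \geq 2$) the twist is trivial and $V^{g-1}(\cF) \neq \widehat{X}$ follows directly from $\codim V^{g-1}(\cF) \geq g-1$. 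Hence the only index that can produce all of $\widehat{X}$ is $i=g$, so $\crego{\cE} = m_{\cE}$ if and only if $V^{g}(\cG) = \widehat{X}$.

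Finally I would convert this into the stated condition by Serre duality on $X$: since $\omega_{X} \cong \cO_{X}$, one has $H^{g}(\cG \otimes \alpha) \cong H^{0}(\cG^{\vee} \otimes \alpha^{-1})^{*}$, and because inversion is an automorphism of $\widehat{X}$ this gives $V^{g}(\cG) = \widehat{X}$ if and only if $V^{0}(\cG^{\vee}) = \widehat{X}$. As $\cG^{\vee} = \cE^{\vee}(g+1-m_{\cE})$, this is precisely $V^{0}(\cE^{\vee}(g+1-m_{\cE})) = \widehat{X}$, as required.

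I expect the main obstacle to be the forward direction of the equality clause, namely showing that no index $i<g$ can make $V^{i}(\cE(m_{\cE}-1-i))$ equal to $\widehat{X}$. The point to get right is that although $\cG = \cF(-1)$ itself fails to be a GV-sheaf (this failure is exactly what forces $m_{\cE}$ to be minimal), the twists $\cE(m_{\cE}-1-i)$ for $i<g$ are still of the form $\cF$ tensored by a nonnegative power of $\cO(1)$ with $\cF$ a GV-sheaf; thus it is the GV-ness of $\cF$, rather than that of $\cG$, that does the work, via Proposition \ref{prop:mreg-it} for the strictly positive twists and via the codimension estimate for the single borderline case $i=g-1$.
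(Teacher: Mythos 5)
Your argument is correct and follows essentially the same route as the paper: the inequality via Proposition \ref{prop:mreg-it} applied to the GV-sheaf $\cE(m_{\cE}-g)$ tensored with positive powers of $\cO(1)$, the reduction of the equality condition to the single index $i=g$, and Serre duality to convert $V^{g}(\cE(m_{\cE}-g-1))=\widehat{X}$ into the stated condition on $V^{0}(\cE^{\vee}(g+1-m_{\cE}))$. The only step the paper includes that you omit is the verification that $m_{\cE}$ is well-defined (the set of admissible $m$ is nonempty and bounded below, which the paper gets from Serre vanishing applied to $\cE$ and $\cE^{\vee}$); on the other hand, your handling of the borderline index $i=g-1$ --- where the GV codimension bound only gives $V^{g-1}(\cE(m_{\cE}-g))\neq\widehat{X}$ rather than emptiness --- is actually more precise than the paper's literal wording, and suffices for the conclusion.
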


\begin{proof}
To see that the right-hand side exists and is finite, observe that by effective Serre vanishing (?), we have for $m,m' >> 0$ and all $\alpha \in \widehat{X}$ that $\cE^{\vee}(m+g) \otimes \alpha$ is globally generated and $\cE(m'-g) \otimes \alpha$ is globally generated with no higher cohomology.  For all such $m$ and $m'$ we have that $V^{g}(\cE(-m-g)) = V^{0}(\cE^{\vee}(m+g)) = \widehat{X}$ (so that $\cE(-m-g)$ is not a GV-sheaf) and $V^{i}(\cE(m'-g)) = \widehat{X}$ if and only if $i=0$ (so that $\cE(m'-g)$ is IT of index 0, hence a GV-sheaf).

If $m \in \Z$ satisfies the property that $\cE(m-g)$ is a GV-sheaf, then it follows from Proposition 3.1 of \cite{PP3} that $\cE(m-i)$ is I.T. of index 0 for $1 \leq i \leq g-1$.  We have at once that $V^{i}(\cE(m-i)) \neq \widehat{X}$ for all $i > 0$; this proves (\ref{eq:thmb-ineq}).  We have equality in (\ref{eq:thmb-ineq}) if and only if $V^{i}(\cE(m_{\cE}-1-i))=\widehat{X}$ for some $i > 0.$  Again using Proposition 3.1 of \cite{PP3}, we have that $V^{i}(\cE(m_{\cE}-1-i)) = \emptyset$ for $1 \leq i \leq g-1,$ so our condition is equivalent to $V^{g}(\cE(m_{\cE}-g-1)) = \widehat{X}$; by Serre duality it is in turn equivalent to $V^{0}(\cE^{\vee}(g+1-m_{\cE}))=\widehat{X}.$
\end{proof}

\begin{rmk}
Since semihomogeneous bundles are Gieseker-semistable with respect to any polarization, it would be interesting in light of Theorem A and Proposition \ref{prop:gen-ineq} to find a sharp numerical upper bound on $\crego{\cE}$ when $\cE$ is a vector bundle which is Gieseker-semistable with respect to $\cO(1)$.
\end{rmk}

\noindent
\textit{Proof of Proposition C:}  If $\cE$ is a vector bundle on $X$ which is a GV-sheaf, it is immediate that $m_{\cE} \leq g$; together with Proposition \ref{prop:gen-ineq}, this proves (\ref{eq:gv-ineq}).  Assume now that $\cE$ is semihomogeneous and $c_{1}(\cE^{\vee}(1))$ is ample.  By Proposition \ref{prop:semihom-pos} we have that $V^{0}(\cE^{\vee}(1))=\widehat{X},$ or equivalently $V^{g}(\cE(-1))=\widehat{X}$; it follows that $\crego{\cE} \geq g.$  Therefore $\crego{\cE} =g$ as desired. \hfill \qedsymbol 
\medskip

We end this section by showing that the conclusion of Proposition C can be strengthened if our vector bundles are taken to be $M-$regular. 

\begin{prop}
 	\label{prop:it-index}
 	Let $X$ be an abelian variety of dimension $g \geq 2$, let $\cO(1)$ be an ample and globally generated line bundle on $X,$ and let $\cF$ be a coherent sheaf on $X$ which is M-regular.  Then we have the following:
 	\begin{itemize}
 		\item[(i)]{${\rm reg}_{\rm cont}(\cF,\cO(1)) \leq {\rm reg}(\cF,\cO(1)) \leq g.$}
 		\item[(ii)]{If $\cF$ is locally free and $\cF^{\vee}(1)$ is continuously globally generated, then $${\rm reg}_{\rm cont}(\cF,\cO(1)) = {\rm reg}(\cF,\cO(1)) = g.$$}
 	\end{itemize}
 \end{prop}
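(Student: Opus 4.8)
The statement splits into one formal inequality and two pieces of genuine content, so the plan is to dispose of (i) and then reduce (ii) to a single non-vanishing. In (i) the inequality ${\rm reg}_{\rm cont}(\cF,\cO(1)) \le {\rm reg}(\cF,\cO(1))$ is immediate from Remark \ref{rmk:reg-ineq}, so the only thing to prove is ${\rm reg}(\cF,\cO(1)) \le g$. First I would record that an $M$-regular sheaf is a GV-sheaf, and that for every $i \ge 1$ the sheaf $\cO(i)$ is locally free and I.T.\ of index $0$, being an ample (hence nondegenerate, index-$0$) line bundle on the abelian variety $X$. Proposition \ref{prop:mreg-it}, applied with $\cH = \cO(i)$, then shows that $\cF(i)$ is I.T.\ of index $0$, so $H^{j}(\cF(i)) = 0$ for all $j>0$ and all $i \ge 1$. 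To conclude ${\rm reg}(\cF,\cO(1)) \le g$ I must check $H^{i}(\cF(g-i)) = 0$ for every $i>0$: for $1 \le i \le g-1$ this is the vanishing just obtained (since then $g-i \ge 1$), for $i>g$ it is automatic by dimension, and for $i=g$ it reads $H^{g}(\cF)=0$, which holds because $M$-regularity forces $V^{g}(\cF)=\emptyset$ (a locus of codimension $>g$ in $\widehat{X}$ must be empty).

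For (ii), note that (i) already gives ${\rm reg}_{\rm cont}(\cF,\cO(1)) \le {\rm reg}(\cF,\cO(1)) \le g$, so it suffices to establish the reverse bound ${\rm reg}_{\rm cont}(\cF,\cO(1)) \ge g$; this squeezes both invariants to $g$. By Definition \ref{def:cont-reg} the lower bound is equivalent to finding some $i>0$ with $V^{i}(\cF(g-1-i)) = \widehat{X}$, and I would produce it in top degree $i=g$, i.e.\ by proving $V^{g}(\cF(-1)) = \widehat{X}$.

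Here the hypotheses on $\cF$ enter through Serre duality. Since $X$ is abelian, $\omega_{X} \cong \cO_{X}$, and as $\cF$ is locally free we obtain $H^{g}(\cF(-1)\otimes\alpha) \cong H^{0}(\cF^{\vee}(1)\otimes\alpha^{\vee})^{\ast}$ for every $\alpha \in \widehat{X}$. Because $\alpha \mapsto \alpha^{\vee}$ is an automorphism of $\widehat{X}$, this identifies the condition $V^{g}(\cF(-1)) = \widehat{X}$ with $V^{0}(\cF^{\vee}(1)) = \widehat{X}$, so everything reduces to the latter non-vanishing, which is where the continuous global generation of $\cF^{\vee}(1)$ must be used.

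The final step is therefore to deduce $V^{0}(\cF^{\vee}(1)) = \widehat{X}$ from continuous global generation. The mechanism is that $V^{0}$ is Zariski-closed by upper semicontinuity of $h^{0}$; were it a proper subset, its nonempty open complement $U$ would satisfy $H^{0}(\cF^{\vee}(1)\otimes\alpha)=0$ for all $\alpha \in U$, making the evaluation map of the definition vanish over $U$ and hence fail to surject onto the nonzero sheaf $\cF^{\vee}(1)$. I expect this to be the delicate step, and the one that must be phrased carefully: it relies on the surjectivity being demanded for a general (equivalently, arbitrarily small) open $U$, rather than for one privileged $U$ containing the trivial character. Indeed, under the weaker reading the conclusion would already fail for $\cF = \cO(1)$, where $\cF^{\vee}(1) = \cO_{X}$ is "generated" by its single untwisted section yet $\crego{\cO(1)} = g-1 < g$ by Remark \ref{rmk:reg-ineq}. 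With this understood, combining the three reductions yields ${\rm reg}_{\rm cont}(\cF,\cO(1)) \ge g$ and hence ${\rm reg}_{\rm cont}(\cF,\cO(1)) = {\rm reg}(\cF,\cO(1)) = g$, which is (ii). By comparison the GV/I.T.\ bookkeeping in (i) and the Serre-duality identification in the third step are routine.
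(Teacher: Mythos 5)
Your proof is correct, and parts of it coincide with the paper's: part (i) is the same argument (Proposition \ref{prop:mreg-it} applied to $\cF$ and the I.T.-index-$0$ bundles $\cO(i)$, plus $V^{g}(\cF)=\emptyset$ from $M$-regularity), and in (ii) both you and the paper reduce to showing $V^{g}(\cF(-1))=\widehat{X}$, i.e.\ $V^{0}(\cF^{\vee}(1))=\widehat{X}$, via Serre duality. Where you genuinely diverge is the last step. The paper invokes Proposition 3.1 of \cite{De} to produce an isogeny $\pi:Y\to X$ with $\pi^{\ast}(\cF^{\vee}(1)\otimes\alpha)$ globally generated for all $\alpha$, expands $H^{0}(\pi^{\ast}(\cF^{\vee}(1)\otimes\alpha))$ via $\pi_{\ast}\cO_{Y}$, and concludes that $\widehat{X}$ is a finite union of translates of the closed locus $V^{0}(\cF^{\vee}(1))$, so one translate (hence the locus itself) is everything. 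You instead argue directly from the definition of continuous global generation: if $V^{0}(\cF^{\vee}(1))$ were proper, its open complement $U$ would kill every summand of the evaluation map over $U$, contradicting surjectivity. Your route is shorter and avoids the external input from \cite{De}, but, as you correctly flag, it hinges on the Pareschi--Popa convention that the evaluation map be surjective for \emph{every} nonempty open $U$ (equivalently, that $V^{0}$ meets every nonempty open set, forcing the closed locus $V^{0}$ to be all of $\widehat{X}$); the paper's Definition 1.3 is stated with ``there is a nonempty Zariski-open $U$,'' under which literal reading your $\cO_{X}$ counterexample shows the statement would fail. This is a real catch: the paper's own proof also tacitly requires the ``for every $U$'' version, since that is the hypothesis of Debarre's result, so the definition in the paper should be amended and your argument then goes through verbatim.
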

 
 \begin{proof}
 	We first establish (i).  Let $\alpha \in \widehat{X}$ be given.  Since $\cO(1)$ is ample, $\O(g-i)$ is I.T. of index 0 for $1 \leq i \leq g-1.$  Given that $\cF$ is $M$-regular, Proposition \ref{prop:mreg-it} implies that $\cF(g-i) \otimes \alpha$ is I.T. of index 0 for $1 \leq i \leq g-1$.  Also, the $M$-regularity of $\cF$ implies that the cohomological support locus $V^{g}(\cF)$ is empty, so that $H^{g}(\cF \otimes \alpha)=0.$  Therefore ${\rm reg}(\cF \otimes \alpha,\cO(1)) \leq g$ for all $\alpha \in \widehat{X}.$  
	
	Setting $\alpha = \cO_{X},$ we have that ${\rm reg}(\cF,\cO(1)) \leq g.$  This concludes the proof of (i) since ${\rm reg}_{\rm cont}(\cF,\cO(1)) \leq {\rm reg}(\cF,\cO(1))$ by definition.
 	
 	Turning to (ii), it is enough by (i) to show that $\crego{\cF} = g$; we will be done once we verify that $V^{g}(\cF(-1))=\widehat{X}.$  By our hypothesis on $\cF^{\vee}(1),$ Proposition 3.1 of \cite{De} implies that there exists an abelian variety $Y$ of dimension $g$ and an isogeny $\pi : Y \to X$ such that $\pi^{\ast}(\cF^{\vee}(1) \otimes \alpha)$ is globally generated for all $\alpha \in \widehat{X}.$  In particular, we have that
 	\begin{equation}
 	\bigoplus_{{\xi} \in {\rm ker}(\pi^{\ast})} H^{0}(\cF^{\vee}(1) \otimes (\alpha \otimes \xi)) = H^{0}(\cF^{\vee}(1) \otimes \alpha \otimes \pi_{\ast}\cO_{Y}) \cong H^{0}(\pi^{\ast}(\cF^{\vee}(1) \otimes \alpha)) \neq 0
 	\end{equation} 
 	for all $\alpha \in \widehat{X}.$  Since ${\rm ker}(\pi^{\ast})$ is finite and the cohomological support loci of coherent sheaves on $X$ are algebraic subsets of $\widehat{X},$ there exists ${\xi}' \in {\rm ker}(\pi^{\ast})$ such that 
 	\begin{equation}
 	H^{g}(\cF(-1) \otimes \alpha \otimes {{\xi}'}^{\vee}) \cong H^{0}(\cF^{\vee}(1) \otimes \alpha^{\vee} \otimes {\xi}')^{\ast} \neq 0
 	\end{equation} for all $\alpha \in \widehat{X}.$  
 \end{proof}
 
 \begin{rem}
 	The $M-$regularity of $\cF$ in Proposition \ref{prop:it-index} cannot be weakened to the property of being a GV-sheaf, since ${\rm reg}(\cO,\cO(1))=g+1.$
 \end{rem}
 



\section{Verlinde Bundles}
\label{sec:verlinde}

In this section we apply our results to the Verlinde bundles, which were introduced in \cite{Po} and studied in \cite{O}.  Throughout, $X$ is the Jacobian of a smooth projective curve $C$ of genus $g \geq 1$ and $\Theta$ is a symmetric theta-divisor on $X.$  

\begin{defn}
Given a pair $(r,k)$ of positive integers, the \textit{Verlinde bundle} associated to $(r,k)$ is 
\begin{equation}
\mathbf{E}_{r,k} := {\rm det}_{\ast}\cO(k{\widetilde{\Theta}})
\end{equation}
where ${\rm det}: U_{C}(r,0) \to \widehat{X} = X$ is the determinant map associated to the moduli space $U_{C}(r,0)$ of semistable bundles of rank $r$ and degree $0$ on $C,$ and $\widetilde{\Theta}$ is the generalized theta-divisor on $U_{C}(r,0)$ associated to a theta-characteristic of $C.$
\end{defn}

\begin{prop}
\label{prop:verlinde}
If $(r,k)$ is a pair of positive integers whose greatest common divisor is odd, then $\mathbf{E}_{r,k}$ is ample, and for any $s \geq 2$ we have
\begin{equation}
\label{eq:verlinde}
{\rm reg}_{\rm cont}(\mathbf{E}_{r,k},\cO(s\Theta))={\Biggl\lceil}g-\frac{k}{rs}{\Biggr\rceil}
\end{equation}
\end{prop}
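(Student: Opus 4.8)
The plan is to reduce the computation of $\crego{\mathbf{E}_{r,k}}$ to an application of Proposition \ref{prop:pic-rk-1}, since $\mathbf{E}_{r,k}$ is semihomogeneous with first Chern class proportional to $c_1(\cO(\Theta))$, and then to carry out the resulting numerical minimization explicitly. First I would recall from Theorem 1 of \cite{O} that, under the hypothesis that $\gcd(r,k)$ is odd, $\mathbf{E}_{r,k}$ is semihomogeneous and ample; ampleness follows as stated, and semihomogeneity places us squarely in the setting of Theorem A. The key structural input is that $c_1(\mathbf{E}_{r,k})$ is a rational multiple of $c_1(\cO(\Theta))$, so that with respect to the polarization $\cO(s\Theta)$ the hypothesis $c_1(\mathbf{E}_{r,k}) \in \Q\, c_1(\cO(s\Theta))$ of Proposition \ref{prop:pic-rk-1} is satisfied. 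By that proposition,
\begin{equation}
\crego{\mathbf{E}_{r,k}} = \min\{m \in \Z : \mathbf{E}_{r,k}(s(m-g)\Theta)\textnormal{ is nef}\},
\end{equation}
so everything comes down to determining the precise range of twists for which the semihomogeneous bundle stays nef.

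The central computation is to pin down the slope of $\mathbf{E}_{r,k}$ relative to $\Theta$. I would use the known description of the Chern data of the Verlinde bundle from \cite{O}: its rank and the class $c_1(\mathbf{E}_{r,k})/\rank(\mathbf{E}_{r,k})$ are expressed in terms of $r$, $k$, and $\Theta$, and the relevant quantity is the normalized slope, which works out to $\tfrac{k}{r}$ times $c_1(\cO(\Theta))$ up to the appropriate normalization (this is exactly where the factor $\tfrac{k}{rs}$ in (\ref{eq:verlinde}) originates). By Proposition \ref{prop:semihom-nef}, nefness of a semihomogeneous bundle is equivalent to nefness of its determinant, and for a bundle with first Chern class proportional to $c_1(\cO(\Theta))$ this reduces to the sign of a single rational number. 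Thus $\mathbf{E}_{r,k}(s(m-g)\Theta)$ is nef precisely when its normalized slope $\tfrac{k}{r} + s(m-g)$ is $\geq 0$, i.e.\ when $m \geq g - \tfrac{k}{rs}$.

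Taking the least integer $m$ satisfying $m \geq g - \tfrac{k}{rs}$ yields $\crego{\mathbf{E}_{r,k}} = \lceil g - \tfrac{k}{rs}\rceil$, which is the asserted formula (\ref{eq:verlinde}). The role of the hypothesis $s \geq 2$ is to guarantee that $\cO(s\Theta)$ is not merely ample but globally generated, so that $\crego{-}$ is defined with respect to an admissible polarization as required throughout the paper; I would note explicitly that $\cO(\Theta)$ itself is ample but not globally generated, which is why $s=1$ is excluded.

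The main obstacle I anticipate is the slope computation: extracting the exact normalized first Chern class $c_1(\mathbf{E}_{r,k})/\rank(\mathbf{E}_{r,k})$ and verifying that it is the correct rational multiple of $c_1(\cO(\Theta))$ to produce the clean value $\tfrac{k}{rs}$. This requires careful bookkeeping of the pushforward $\det_*\cO(k\widetilde{\Theta})$ along the determinant map and matching conventions for the theta-divisor and the theta-characteristic normalization used in \cite{O}; once the slope is correct, the nefness threshold and the ceiling are immediate from Proposition \ref{prop:pic-rk-1} and Proposition \ref{prop:semihom-nef}. A secondary point to check is that ampleness of $\mathbf{E}_{r,k}$ (hence its being a GV-sheaf, via Proposition \ref{prop:semihom-nef}) is consistent with the computed regularity, which serves as a sanity check that the threshold lands in the expected range.
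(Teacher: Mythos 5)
Your proposal is correct and follows essentially the same route as the paper: ampleness and semihomogeneity from Theorem 1 of \cite{O}, reduction to the nefness threshold via Proposition \ref{prop:pic-rk-1} and Proposition \ref{prop:semihom-nef}, and the slope computation giving $k/r + s(m-g) \geq 0$, hence the ceiling. The only cosmetic difference is that the paper first passes to the simple summand $\mathbf{W}_{a,b}$ (with rank $a^{g}$ and determinant $\cO(a^{g-1}b\Theta)$) via Proposition \ref{prop:simp-uni} before computing, whereas you work with the normalized first Chern class of $\mathbf{E}_{r,k}$ directly; since $a^{g-1}b/a^{g} = k/r$, the two computations coincide.
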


\begin{proof}
If $h={\rm gcd}(r,k),$ let $a:=r/h$ and $b:=k/h.$  By Remark 7.13 of \cite{Mu1} there exists a unique simple semihomogeneous bundle $\mathbf{W}_{a,b}$ which is symmetric and satisfies 
\begin{equation}
{\rm rank}(\mathbf{W}_{a,b}) = a^{g}, \hskip10pt {\rm det}(\mathbf{W}_{a,b}) \cong \cO(a^{g-1}b\Theta)
\end{equation}
Since $a^{g-1}b > 0$ we have from Proposition \ref{prop:semihom-pos} that $\mathbf{W}_{a,b}$ is ample.  Theorem 1 of \cite{O} implies that $\mathbf{E}_{r,k}$ is a direct sum of bundles of the form $\mathbf{W}_{a,b} \otimes \eta$, where $\eta$ is a torsion element of $\widehat{X};$ in particular $\mathbf{E}_{r,k}$ is ample as claimed.  We then have from Propositions \ref{prop:simp-uni} and \ref{prop:pic-rk-1} that
\begin{equation}
\label{eq:ver-w}
{\rm reg}_{\rm cont}(\mathbf{E}_{r,k},\cO(s\Theta)) = {\rm reg}_{\rm cont}(\mathbf{W}_{a,b},\cO(s\Theta)) = \min\{m : \cO(a^{g-1}(b+as(m-g))\Theta)\textnormal{ is nef}\}
\end{equation}
and (\ref{eq:verlinde}) follows from a routine calculation.  
\end{proof}

The next statement follows at once from (\ref{eq:verlinde}), Proposition \ref{prop:semihom-pos}, and (i) of Proposition \ref{prop:it-index}.
\begin{cor}
In the setting of Proposition \ref{prop:verlinde}, we have that
\begin{equation}
\label{eq:verlinde-eq}
{\Biggl\lceil}g-\frac{k}{rs}{\Biggr\rceil} \leq {\rm reg}(\mathbf{E}_{r,k},\cO(s\Theta)) \leq g
\end{equation}
In particular ${\rm reg}(\mathbf{E}_{r,k},\cO(s\Theta)) = g$ for $k < rs.$ \hfill \qedsymbol
\end{cor}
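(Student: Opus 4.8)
The plan is to sandwich the ordinary CM-regularity between the continuous CM-regularity, which (\ref{eq:verlinde}) has already computed to be $\lceil g - k/(rs)\rceil$, and the uniform ceiling $g$. The left-hand inequality in (\ref{eq:verlinde-eq}) is essentially free: by Remark \ref{rmk:reg-ineq} one always has ${\rm reg}_{\rm cont}(\mathbf{E}_{r,k},\cO(s\Theta)) \leq {\rm reg}(\mathbf{E}_{r,k},\cO(s\Theta))$, and substituting (\ref{eq:verlinde}) into the left-hand side yields $\lceil g - k/(rs)\rceil \leq {\rm reg}(\mathbf{E}_{r,k},\cO(s\Theta))$. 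All of the content is therefore concentrated in the right-hand inequality ${\rm reg}(\mathbf{E}_{r,k},\cO(s\Theta)) \leq g$.

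For that bound I would invoke part (i) of Proposition \ref{prop:it-index}, which delivers ${\rm reg}(\cF,\cO(1)) \leq g$ for any $M$-regular $\cF$. The only hypothesis to check is that $\mathbf{E}_{r,k}$ is $M$-regular, and this is immediate: Proposition \ref{prop:verlinde} already records that $\mathbf{E}_{r,k}$ is ample, and since $\mathbf{E}_{r,k}$ is semihomogeneous the equivalence (v)$\Leftrightarrow$(iii) of Proposition \ref{prop:semihom-pos} converts ampleness into $M$-regularity at no cost. Applying Proposition \ref{prop:it-index}(i) with $\cF = \mathbf{E}_{r,k}$ and $\cO(1) = \cO(s\Theta)$ then closes the upper estimate, and (\ref{eq:verlinde-eq}) follows.

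The ``in particular'' clause is a one-line evaluation of the ceiling: if $k < rs$ then $0 < k/(rs) < 1$ (using $k \geq 1$), so $g - k/(rs)$ lies strictly in the open interval $(g-1, g)$ and $\lceil g - k/(rs)\rceil = g$. The two outer terms of (\ref{eq:verlinde-eq}) then coincide, forcing ${\rm reg}(\mathbf{E}_{r,k},\cO(s\Theta)) = g$.

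The one point of genuine friction is that Proposition \ref{prop:it-index} carries the hypothesis $g \geq 2$, whereas the setting of Proposition \ref{prop:verlinde} permits $g = 1$. I expect this edge case to be the main obstacle, and I would dispatch it by hand: on an elliptic curve ${\rm reg}(\mathbf{E}_{r,k},\cO(s\Theta)) \leq 1$ is equivalent to $H^{1}(\mathbf{E}_{r,k}) = 0$, and Serre duality identifies this group with $H^{0}(\mathbf{E}_{r,k}^{\vee})^{\ast}$ (using $\omega_{X} \cong \cO_{X}$); since the ample bundle $\mathbf{E}_{r,k}$ has a dual all of whose summands have negative degree and hence no global sections, the required vanishing holds and the upper bound survives at $g=1$. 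This is the only step not covered verbatim by the three cited ingredients.
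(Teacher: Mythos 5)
Your proof is correct and follows exactly the paper's route: the three ingredients you cite --- equation (\ref{eq:verlinde}) together with ${\rm reg}_{\rm cont} \leq {\rm reg}$ for the lower bound, and ampleness $\Rightarrow$ $M$-regularity (Proposition \ref{prop:semihom-pos}) fed into Proposition \ref{prop:it-index}(i) for the upper bound --- are precisely the ones the paper invokes. Your extra care with the $g=1$ case is a reasonable refinement the paper glosses over, though in fact the proof of Proposition \ref{prop:it-index}(i) goes through verbatim when $g=1$ (the range $1 \leq i \leq g-1$ is vacuous and only the emptiness of $V^{g}(\cF)$ is needed), so the hypothesis $g \geq 2$ there is not essential for part (i).
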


\section{Products}
\label{sec:products}

We end the paper by applying our results to products of abelian varieties. 

\begin{lem}
	\label{lem:semi-prod}
	Let $A_{1}, \ldots ,A_{m}$ be abelian varieties, and for $i=1, \cdots ,m$ let $\cF_i$ be a semihomogeneous bundle on $A_i.$  Then $\cF_{1} \boxtimes \cdots \boxtimes \cF_{m}$ is a semihomogeneous bundle on $X\deq A_{1} \times \cdots \times A_{m}.$
\end{lem}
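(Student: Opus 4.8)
The plan is to verify the defining translation condition directly, exploiting the fact that a translation on a product factors through the projections. Write $X \deq A_1 \times \cdots \times A_m$, let $p_i : X \to A_i$ be the $i$-th projection, and recall that by definition $\cF_1 \boxtimes \cdots \boxtimes \cF_m = p_1^{\ast}\cF_1 \otimes \cdots \otimes p_m^{\ast}\cF_m$. A point of $X$ has the form $x = (x_1, \ldots, x_m)$ with $x_i \in A_i$, and the corresponding translation factors as $t_x = t_{x_1} \times \cdots \times t_{x_m}$. The one identity I would record at the outset is $p_i \circ t_x = t_{x_i} \circ p_i$ for each $i$, which is immediate from this factorization.

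First I would pull the box product back along $t_x$, using that pullback commutes with tensor products, to obtain
\[
t_x^{\ast}(\cF_1 \boxtimes \cdots \boxtimes \cF_m) \cong \bigotimes_{i=1}^m (p_i \circ t_x)^{\ast}\cF_i = \bigotimes_{i=1}^m p_i^{\ast}\bigl(t_{x_i}^{\ast}\cF_i\bigr).
\]
Then I would invoke the semihomogeneity of each factor: for every $i$ there is some $\alpha_i \in \widehat{A_i}$ with $t_{x_i}^{\ast}\cF_i \cong \cF_i \otimes \alpha_i$. Substituting and redistributing the tensor product gives $t_x^{\ast}(\cF_1 \boxtimes \cdots \boxtimes \cF_m) \cong (\cF_1 \boxtimes \cdots \boxtimes \cF_m) \otimes \alpha$, where $\alpha \deq p_1^{\ast}\alpha_1 \otimes \cdots \otimes p_m^{\ast}\alpha_m = \alpha_1 \boxtimes \cdots \boxtimes \alpha_m$.

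The only point requiring a word of justification—and the closest thing to an obstacle, though a mild one—is that this $\alpha$ lies in $\widehat{X} = \Pic^{0}(X)$. This follows from the identification $\widehat{X} \cong \widehat{A_1} \times \cdots \times \widehat{A_m}$ together with the observation that each $p_i^{\ast}\alpha_i$ is algebraically equivalent to zero (pullback preserves algebraic equivalence to zero), so their tensor product lies in the group $\Pic^{0}(X)$. Since $x$ was arbitrary, this establishes the defining property for $\cF_1 \boxtimes \cdots \boxtimes \cF_m$ and completes the argument. Alternatively, one could run the whole thing by induction on $m$, where associativity of $\boxtimes$ reduces everything to the case $m = 2$; but the direct computation above is clean enough that I would not bother.
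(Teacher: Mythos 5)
Your proposal is correct and follows essentially the same route as the paper's proof: both pull back the box product along $t_x$, use the identity $p_i \circ t_x = t_{x_i} \circ p_i$, apply semihomogeneity factorwise, and reassemble the twist as $\alpha_1 \boxtimes \cdots \boxtimes \alpha_m$. Your extra remark that this twist lies in $\Pic^0(X)$ via $\widehat{X} \cong \widehat{A_1} \times \cdots \times \widehat{A_m}$ is a small point the paper leaves implicit, but the argument is the same.
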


\begin{proof}
	Let $x = (x_{1}, \cdots ,x_{m}) \in X$ be given, and for $i=1, \ldots ,m$ let $\alpha_{i} \in \widehat{A}_i$ satisfy $t_{x_i}^{\ast}\cF_i \cong \alpha_i \otimes \cF_i.$  
	If $p_{i} : X \to A_i$ is projection onto the $i$\textsuperscript{th}  factor, then we have
	\begin{equation*}
	t_{x}^{\ast}(\cF_1 \boxtimes \cdots \boxtimes \cF_{m}) \cong (p_1 \circ t_{x})^{\ast}\cF_1 \otimes \cdots \otimes (p_m \circ t_{x})^{\ast}\cF_m
	\end{equation*}
	\begin{equation*}
	\cong (t_{x_1} \circ p_1)^{\ast}\cF_1 \otimes \cdots \otimes (t_{x_m} \circ p_{m})^{\ast}\cF_{m} \cong (\alpha_{1} \boxtimes \cdots \boxtimes \alpha_{m}) \otimes (\cF_1 \boxtimes \cdots \boxtimes \cF_{m}).
	\end{equation*}
\end{proof}

\begin{rem}\label{rem:simplyfing}
	If in the setting of Lemma \ref{lem:semi-prod}, we assume that $A_{1},\ldots,A_{m}$ are pairwise non-isogenous, then for any line bundle $\cL$ on $X$ there exists for each $i$ a line bundle $\cL_{i}$ on $A_{i}$ such that $\cL\simeq \cL_1\boxtimes\ldots\boxtimes \cL_m$. 
\end{rem}

\begin{prop}
	\label{prop:non-isog-gen}
	Let $X = A_{1} \times A_{2},$ where $A_1$ and $A_2$ are non-isogenous abelian varieties each having dimension $g \geq 1,$ and let $\cO(1)$ be an ample and globally generated line bundle on $X.$  Then if $\cF$ is a semihomogeneous bundle on $X$ of rank $r \geq 1$ and $\cF_{1},\cF_{2}$ are the restrictions of $\cF$ to $A_1 \times \{0\},$ $\{0\} \times A_2$, respectively, we have
	\begin{equation*}
	{\rm reg}_{\rm cont}(\cF,\cO(1)) = {\rm reg}_{\rm cont}(\cF_{1} \boxtimes \cF_{2},\cO(1))\ ,
	\end{equation*}
\end{prop}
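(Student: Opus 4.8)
The plan is to reduce everything to Theorem A in the form of Proposition \ref{prop:exact-reg}, which asserts that for a semihomogeneous bundle $\cE$ of rank $s$ one has ${\rm reg}_{\rm cont}(\cE,\cO(1)) = \rho_{\eta}(c_{1}(\cE)/s)$, so that the continuous CM-regularity of a semihomogeneous bundle is determined by its normalized first Chern class $c_{1}(\cE)/\rank(\cE) \in N^{1}_{\R}(X)$. Thus it suffices to verify two things: first, that $\cF_{1} \boxtimes \cF_{2}$ is again semihomogeneous, and second, that $\cF$ and $\cF_{1} \boxtimes \cF_{2}$ have the same normalized first Chern class.

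For the first point I would check that each restriction $\cF_{i}$ is semihomogeneous and then invoke Lemma \ref{lem:semi-prod}. Given $x_{1} \in A_{1}$, semihomogeneity of $\cF$ provides $\alpha = (\alpha_{1},\alpha_{2}) \in \widehat{A}_{1} \times \widehat{A}_{2} = \widehat{X}$ with $t_{(x_{1},0)}^{\ast}\cF \cong \cF \otimes \alpha$. Restricting this isomorphism to $A_{1} \times \{0\}$, and noting that $t_{(x_{1},0)}$ preserves $A_{1} \times \{0\}$ and restricts there to $t_{x_{1}}$ while $\alpha$ restricts to $\alpha_{1}$, yields $t_{x_{1}}^{\ast}\cF_{1} \cong \cF_{1} \otimes \alpha_{1}$ with $\alpha_{1} \in \widehat{A}_{1}$. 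Hence $\cF_{1}$, and symmetrically $\cF_{2}$, is semihomogeneous, and Lemma \ref{lem:semi-prod} shows $\cF_{1} \boxtimes \cF_{2}$ is semihomogeneous.

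For the second point I would exploit the non-isogeny hypothesis through Remark \ref{rem:simplyfing}: the line bundle $\det \cF$ splits as $\cM_{1} \boxtimes \cM_{2}$ with $\cM_{i}$ on $A_{i}$. Since $\det$ commutes with restriction, restricting $\det \cF = \cM_{1} \boxtimes \cM_{2}$ to $A_{1} \times \{0\}$ gives $\det \cF_{1} \cong \cM_{1}$ and likewise $\det \cF_{2} \cong \cM_{2}$, so that $c_{1}(\cF) = p_{1}^{\ast}c_{1}(\cF_{1}) + p_{2}^{\ast}c_{1}(\cF_{2})$. As $\rank \cF_{1} = \rank \cF_{2} = r$, the Chern class formula for a tensor product gives $c_{1}(\cF_{1} \boxtimes \cF_{2}) = r\,p_{1}^{\ast}c_{1}(\cF_{1}) + r\,p_{2}^{\ast}c_{1}(\cF_{2}) = r\,c_{1}(\cF)$, while $\rank(\cF_{1} \boxtimes \cF_{2}) = r^{2}$. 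Therefore
\[
\frac{c_{1}(\cF_{1} \boxtimes \cF_{2})}{\rank(\cF_{1} \boxtimes \cF_{2})} = \frac{r\,c_{1}(\cF)}{r^{2}} = \frac{c_{1}(\cF)}{r},
\]
so the two normalized first Chern classes coincide in $N^{1}_{\R}(X)$, and Proposition \ref{prop:exact-reg} finishes the argument.

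I expect the main obstacle to be the Chern class computation in the last step, specifically ensuring that $c_{1}(\cF)$ has no off-diagonal component, i.e.\ that it decomposes as a sum of a class pulled back from $A_{1}$ and one pulled back from $A_{2}$. This is exactly where the non-isogeny assumption is essential: it is what guarantees, via Remark \ref{rem:simplyfing}, that $\det \cF$ splits as an exterior product with no Poincar\'{e}-type cross term coming from $\Hom(A_{1}, \widehat{A}_{2})$. For isogenous factors such a cross term can be present, and then $\cF_{1} \boxtimes \cF_{2}$ would fail to recover the slope of $\cF$.
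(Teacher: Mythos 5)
Your argument is correct, and it reaches the key identity $c_{1}(\cF) = p_{1}^{\ast}c_{1}(\cF_{1}) + p_{2}^{\ast}c_{1}(\cF_{2})$ by a genuinely different route than the paper. Both proofs share the same skeleton: reduce via Theorem A (Proposition \ref{prop:exact-reg}) to comparing the normalized first Chern classes of $\cF$ and $\cF_{1}\boxtimes\cF_{2}$, and note that $c_{1}(\cF_{1}\boxtimes\cF_{2}) = r\bigl(p_{1}^{\ast}c_{1}(\cF_{1})+p_{2}^{\ast}c_{1}(\cF_{2})\bigr)$. Where you diverge is in establishing the decomposition of $c_{1}(\cF)$: the paper runs an intersection-number computation, choosing ample generators $L_{i}$ of $\Pic(A_{i})$, writing $c_{1}(\cF)=a_{1}D_{1}+a_{2}D_{2}$ and $c_{1}(\cF_{i})=b_{i}c_{1}(L_{i})$, and pairing against $D_{1}^{g}$ and $D_{2}^{g}$ to force $a_{i}=b_{i}$ --- an argument that quietly assumes each $A_{i}$ has Picard number $1$. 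You instead apply Remark \ref{rem:simplyfing} directly to $\det\cF$, split it as $\cM_{1}\boxtimes\cM_{2}$, and identify $\cM_{i}\cong\det\cF_{i}$ by restricting; this is shorter, avoids the Picard-number-one hypothesis entirely, and makes transparent that the non-isogeny assumption enters precisely to kill the cross term in $\Pic(A_{1}\times A_{2})$. You also explicitly verify that the restrictions $\cF_{i}$ are semihomogeneous (needed before invoking Lemma \ref{lem:semi-prod}), a point the paper leaves implicit. The only caveat is that your proof inherits whatever generality Remark \ref{rem:simplyfing} actually has --- the splitting of line bundles on a product requires $\Hom(A_{1},\widehat{A}_{2})=0$, which is a slightly stronger condition than non-isogeny when the factors are not simple --- but this is a limitation of the paper's own setup, not a defect introduced by your argument.
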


\begin{proof}
	As before, let $p_{i} : X \to A_{i}$ be projection onto the $i-$th factor.  Given that $\cF_{1} \boxtimes \cF_{2}$ is semihomogeneous by Lemma \ref{lem:semi-prod}, it is enough to show that
	\[
		c_{1}(\cF)/r \,=\, c_{1}(\cF_{1} \boxtimes \cF_{2})/r^{2}
	\]
	as elements of $N^1_\Q(X)$, thanks to Theorem A.  Since $c_{1}(\cF_{1} \boxtimes \cF_{2}) = r(p_{1}^{\ast}c_{1}(\cF_{1})+p_{2}^{\ast}c_{1}(\cF_{2})),$ we will be done once we show that $c_{1}(\cF) =  p_{1}^{\ast}c_{1}(\cF_{1})+p_{2}^{\ast}c_{1}(\cF_{2}).$ 
		 
	 For $i=1,2$ let $L_i$ be an ample line bundle on $A_i$ such that $c_{1}(L_{i})$ generates ${\rm Pic}(A_{i}),$ and let $D_{i} := p_{i}^{\ast}c_{1}(L_{i}).$  Then for $0 \leq k \leq 2g$ we have 
		\begin{equation}
		\label{eq:prod-int}
		D_{1}^{k} \cdot D_{2}^{2g-k} = \delta_{kg} \cdot L_{1}^{g} \cdot L_{2}^{g} 
		\end{equation}
	Since $A_{1}$ and $A_{2}$ are non-isogenous, for some $a_1, a_ 2 \in \Z$ for which $c_{1}(\cF) = a_{1}D_{1} + a_{2}D_{2}.$  Also, since the $A_{i}$ each have Picard number 1, there are $b_1, b_2 \in \Z$ such that $c_{1}(\cF_{i}) = b_{i}c_{1}(L_{i}).$  In particular, $p_{1}^{\ast}c_{1}(\cF_{1})+p_{2}^{\ast}c_{1}(\cF_{2}) = b_{1}D_{1}+b_{2}D_{2}.$  We have that
		\begin{equation}
		\label{eq:prod-comp}
		c_{1}(\cF) \cdot D_{1}^{g} = p_{2}^{\ast}c_{1}(\cF_{2}) \cdot D_{1}^{g} = b_{2}D_{1}^{g}D_{2}, \hskip20pt c_{1}(\cF) \cdot D_{2}^{g} = p_{1}^{\ast}c_{1}(\cF_{1}) \cdot D_{2}^{g} = b_{1}D_{1}D_{2}^{g}
		\end{equation}
	Combining this with (\ref{eq:prod-int}) yields
		\begin{equation}
		b_{1}D_{1}^{g} \cdot D_{2}^{g} = p_{1}^{\ast}c_{1}(\cF_{1}) \cdot D_{1}^{g-1}D_{2}^{g} = (a_{1}D_{1}+a_{2}D_{2}) \cdot D_{1}^{g-1} \cdot D_{2}^{g} = a_{2}D_{1}^{g} \cdot D_{2}^{g}
		\end{equation}
		\begin{equation*}
		b_{2}D_{1}^{g} \cdot D_{2}^{g} = p_{2}^{\ast}c_{1}(\cF_{2}) \cdot D_{1}^{g}D_{2}^{g-1} = (a_{1}D_{1}+a_{2}D_{2}) \cdot D_{1}^{g} \cdot D_{2}^{g-1} = a_{2}D_{1}^{g} \cdot D_{2}^{g}
		\end{equation*}
	Since $D_{1}^{g} \cdot D_{2}^{g} \neq 0$ by (\ref{eq:prod-int}), we conclude that $c_{1}(\cF) =  p_{1}^{\ast}c_{1}(\cF_{1})+p_{2}^{\ast}c_{1}(\cF_{2})$ as desired.

\end{proof}

Lastly, we illustrate the use of Proposition \ref{prop:non-isog-gen} with an explicit computation for the product of two non-isogenous elliptic curves. 

\begin{prop}
	\label{prop:non-isog-product}
	Let $X = E_1 \times E_2,$ where $E_1$ and $E_2$ are non-isogenous elliptic curves, and let $\cO(1) = \cO_{E_1}(1) \boxtimes \cO_{E_2}(1),$ where $\cO_{E_i}(1)$ is an ample and globally generated line bundle on $E_i$ for $i=1,2.$  Let $\cF$ be a semihomogeneous bundle on $X$ of rank $r \geq 1,$ and $m_{1}, m_{2}$ be the continuous CM-regularity of the restriction of $\cF$ to $E_{1} \times \{0\}, \{0\} \times E_{2}$ with respect to $\cO_{E_{1}}(1),\cO_{E_{2}}(1),$ respectively.  Then
	\begin{equation*}
	{\rm reg}_{\rm cont}(\cF,\cO(1)) = \max\{\min\{m_{1},m_{2}\}+1,\max\{m_{1},m_{2}\}\}\ .
	\end{equation*}
\end{prop}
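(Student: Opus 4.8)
The plan is to reduce to an external tensor product and then run a direct cohomological computation on $X = E_1 \times E_2$. By Proposition \ref{prop:non-isog-gen} we have ${\rm reg}_{\rm cont}(\cF,\cO(1)) = {\rm reg}_{\rm cont}(\cF_1 \boxtimes \cF_2,\cO(1))$, so I may replace $\cF$ by $\cG := \cF_1 \boxtimes \cF_2$ throughout. Note that each restriction $\cF_i$ is semihomogeneous on $E_i$ (restrict the defining isomorphism $t_x^{\ast}\cF \cong \cF \otimes \alpha$ along the factor $E_i \times \{0\}$), hence so is every twist $\cF_i(n) = \cF_i \otimes \cO_{E_i}(n)$. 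The first step is to record, for a semihomogeneous bundle $\cH$ on a single elliptic curve $E$, the behavior of its support loci. Decomposing $\cH$ as in Proposition \ref{prop:semi-decomp} and using that all simple constituents share the slope of $\cH$ by \eqref{eq:slopes}, together with Propositions \ref{prop:sh-it}, \ref{lem:sh-nondeg} and Lemma \ref{lem:wit-deg}, one gets the trichotomy: $V^0(\cH) = \widehat{E}$ and $V^1(\cH) = \emptyset$ when $\deg \cH > 0$; $V^0(\cH) = \emptyset$ and $V^1(\cH) = \widehat{E}$ when $\deg \cH < 0$; and both loci proper when $\deg \cH = 0$. Applying this to $\cF_i(n)$ and unwinding Definition \ref{def:cont-reg} on a curve shows that $V^1(\cF_i(n)) = \widehat{E_i}$ exactly for $n \le m_i - 2$, that $V^1(\cF_i(n))$ is proper exactly for $n \ge m_i - 1$, and dually that $V^0(\cF_i(n)) = \widehat{E_i}$ for $n \ge m_i$, with both loci proper at the single intermediate value of $n$ when the slope is integral.

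Next I would feed this into the K\"{u}nneth formula. Writing $\widehat{X} = \widehat{E_1} \times \widehat{E_2}$ and using $\cG(n) = \cF_1(n) \boxtimes \cF_2(n)$, for a degree-zero line bundle $(\alpha_1,\alpha_2)$ the cohomology $H^{\bullet}(\cG(n) \otimes (\alpha_1 \boxtimes \alpha_2))$ splits into tensor products of the cohomology of the two factors. Since $\dim X = 2$, only $i \in \{1,2\}$ are relevant, and one obtains
\[
V^2(\cG(m-2)) = V^1(\cF_1(m-2)) \times V^1(\cF_2(m-2)),
\]
\[
V^1(\cG(m-1)) = \bigl(V^0(\cF_1(m-1)) \times V^1(\cF_2(m-1))\bigr) \cup \bigl(V^1(\cF_1(m-1)) \times V^0(\cF_2(m-1))\bigr).
\]

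The combinatorial heart of the argument is deciding when these loci fill up $\widehat{X}$. The product $V^2(\cG(m-2))$ equals $\widehat{X}$ if and only if both factors are full, which by the single-curve trichotomy happens exactly for $m \le \min\{m_1,m_2\}$; hence $V^2(\cG(m-2)) \neq \widehat{X}$ if and only if $m \ge \min\{m_1,m_2\} + 1$. For $V^1(\cG(m-1))$ I would prove the elementary two-rectangle fact that a union $(A_1 \times B_1) \cup (A_2 \times B_2)$ of products of subsets of $\widehat{E_1} \times \widehat{E_2}$ equals the whole space only if some $A_j$ and $B_j$ are simultaneously full. Tracking fullness through the trichotomy then shows that $V^1(\cG(m-1)) = \widehat{X}$ precisely when the twisted slopes of $\cF_1(m-1)$ and $\cF_2(m-1)$ have strictly opposite signs. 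Assembling the two constraints and taking $m_1 \le m_2$ without loss of generality: the $V^2$-condition requires $m \ge m_1 + 1$, while among such $m$ the $V^1$-condition additionally fails exactly for $m_1 + 1 \le m \le m_2 - 1$; a short split on whether $m_1 < m_2$ or $m_1 = m_2$ then pins the least admissible value at $\max\{m_2,\, m_1 + 1\} = \max\{\min\{m_1,m_2\}+1,\ \max\{m_1,m_2\}\}$.

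I expect the main obstacle to be the bookkeeping in the two-rectangle fullness analysis, combined with the integral-slope boundary cases, where one of $V^0(\cF_i(m-1))$ or $V^1(\cF_i(m-1))$ degenerates to a proper subset exactly at the threshold value of $m$; one must verify that such degenerate configurations never accidentally cover $\widehat{X}$ and never shift the minimum. The cleanest way to neutralize these delicate cases is to phrase everything in terms of the signs of the twisted slopes $s_i := \mu_i / \deg \cO_{E_i}(1) + m - 1$ rather than the discrete thresholds, since an exceptional value $s_i = 0$ contributes a factor of zero to the product $s_i s_j$ and is automatically excluded from the strict-opposite-sign condition characterizing $V^1(\cG(m-1)) = \widehat{X}$.
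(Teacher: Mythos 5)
Your argument is correct and follows essentially the same route as the paper: reduce to $\cF_1\boxtimes\cF_2$ via Proposition \ref{prop:non-isog-gen}, apply the K\"unneth formula, and decide everything by the signs of the degrees of the twisted factors on each curve. The only real difference is organizational: the paper proves the upper bound and then handles the lower bound by a two-case analysis through nefness/ampleness (switching from $H^{1}(\cF(m_2-2))$ to $H^{2}(\cF(m_2-3))$ in the slope-zero boundary case), whereas you compute all the loci $V^{i}(\cG(m-i))$ uniformly from the degree trichotomy for semihomogeneous bundles on an elliptic curve, which absorbs those boundary cases automatically.
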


\begin{proof}
	According to Proposition~\ref{prop:non-isog-gen} we can and will assume without loss of generality that $\cF = \cF_{1} \boxtimes \cF_{2}.$
	
	It follows from the K\"{u}nneth formula that $H^{1}(\cF(m-1))=0$ for $m \geq \max\{m_{1},m_{2}\}$ and that $H^{2}(\cF(m-2))=0$ for $m \geq \min\{m_{1},m_{2}\}+1.$  We then have that
	\begin{equation*}
	{\rm reg}_{\rm cont}(\cF,\cO(1)) \leq \max\{\min\{m_{1},m_{2}\}+1,\max\{m_{1},m_{2}\}\}
	\end{equation*}
	To sharpen this to equality, we consider two cases.  If $m_{1} < m_{2},$ then 
	\[
	\max\{\min\{m_{1},m_{2}\}+1,\max\{m_{1},m_{2}\}\} = m_{2}\ ,
	\]
	and we want to show that ${\rm reg}_{\rm cont}(\cF,\cO(1)) > m_{2}-1.$  
	
	In any case we have that
	\begin{equation*}
	h^{1}(\cF(m_{2}-2)) =  h^{1}(\cF_{1}(m_{2}-2)) \cdot h^{0}(\cF_{2}(m_{2}-2)) + h^{0}(\cF_{1}(m_{2}-2)) \cdot h^{1}(\cF_{2}(m_{2}-2))\ .
	\end{equation*}
	Since $m_{1} \leq m_{2}-1$ we have by Theorem~C that $\cF_{1}(m_{2}-2)$ is nef.  If it is ample, then 
	\[
	h^{0}(\cF_{1}(m_{2}-2)) \cdot h^{1}(\cF_{2}(m_{2}-2)) > 0
	\]
	and 
	\[
	{\rm reg}_{\rm cont}(\cF,\cO(1)) > m_{2}-1
	\]
	as desired. If it is nef but not ample, then by generic vanishing we have that $\cF_{1}(m_{2}-2)$ is semistable of slope $0$, so that 
	$\cF_{1}(m_{2}-3)$ is semistable of negative slope.  We then have that
	\begin{equation*}
	h^{2}(\cF(m_{2}-3)) = h^{1}(\cF_{1}(m_{2}-3)) \cdot h^{1}(\cF_{2}(m_{2}-3)) > 0\ ,
	\end{equation*}
	so that again ${\rm reg}_{\rm cont}(\cF,\cO(1)) > m_{2}-1$.
	
	It remains to settle the case $m_1 = m_2.$  Here 
	\[
	\max\{\min\{m_{1},m_{2}\}+1,\max\{m_{1},m_{2}\}\} = m_{1}+1 = m_{2}+1\ ,
	\]
	and we would like to show that ${\rm reg}_{\rm cont}(\cF,\cO(1)) > m_{1}.$  For $i=1,2$, Theorem~C yields  that $\cF_{i}^{\vee}(2-m_{i})$ is ample, so  
	\begin{equation*}
	h^{2}(\cF(m_{1}-2)) = h^{1}(\cF_{1}(m_{1}-2)) \cdot h^{1}(\cF_{2}(m_{2}-2)) = h^{0}(\cF_{1}^{\vee}(2-m_{1})) \cdot h^{0}(\cF_{2}(2-m_{2})) > 0
	\end{equation*}
	Thus ${\rm reg}_{\rm cont}(\cF,\cO(1)) > m_{1}$ as desired.  This concludes the proof.
\end{proof}

\end{document}